\newtheorem{thm}{Theorem}[section]
\newtheorem{corollary}[thm]{Corollary}
\newtheorem{lemma}[thm]{Lemma}
\newtheorem{proposition}[thm]{Proposition}
\theoremstyle{definition}
\newtheorem{definition}[thm]{Definition}
\theoremstyle{remark}
\newtheorem{rem}[thm]{Remark}
\numberwithin{equation}{section}
\newcommand{\set}[1]{\left\{#1\right\}}
\newcommand{\frakh}{\mathfrak{h}}
\newcommand{\R}{\mathbb R}
\newcommand{\obj}{\mathrm{Ob}} 
\newcommand{\id}{\mathrm{id}}
\newcommand{\setc}[2]{\set{#1 \mid #2}}
\definecolor{darkblue}{rgb}{0,0,0.7} 
\newcommand{\darkblue}{\color{darkblue}} 
\newcommand{\defn}[1]{{\darkblue \emph{#1}}}
\newcommand{\so}{\mathrm{sc}} 
\newcommand{\sk}{\mathrm{sk}}
\newcommand{\calS}{\mathcal{S}}
\DeclareMathOperator{\oLk}{Lk^{+}}
\newcommand{\eqdef}{\mbox{\,\raisebox{0.2ex}{\scriptsize\ensuremath{\mathrm:}}\ensuremath{=}\,}} 
\newcommand{\defeq}{\mbox{~\ensuremath{=}\raisebox{0.2ex}{\scriptsize\ensuremath{\mathrm:}} }} 
\newcommand{\extu}[1]{[[#1]]}
\newcommand{\extd}[2]{[[#1]]^{#2}}
\newlength\myheight
\newlength\mydepth
\settototalheight\myheight{Xygp}
\tikzset{math3d/.style=
    {x= {(-0.353cm,-0.353cm)}, z={(0cm,1cm)},y={(1cm,0cm)}}}
\tikzset{JLL3d/.style=
    {x= {(0.4cm,-0.2cm)}, z={(0cm,1cm)},y={(-1cm,0cm)}}}
\tikzset{
  optree/.style={scale=.5,thick,grow'=up,level distance=10mm,inner sep=1pt},
  comp/.style={draw=none,circle,fill,line width=0,inner sep=0pt},
  dot/.style={draw,circle,fill,inner sep=0pt,minimum width=3pt},
  circ/.style={draw,circle,inner sep=1pt,minimum width=4mm},
  emptycirc/.style={draw,circle,inner sep=1pt,minimum width=2mm},
  root/.style={level distance=10mm,inner sep=1pt},
  leaf/.style={draw=none,circle,fill,line width=0,inner sep=0pt},
  nodot/.style={draw,circle,inner sep=1pt},
}
\pgfplotsset{compat=1.12}
\title{Topological proofs of categorical coherence}
\author{Pierre-Louis Curien}
\address{IRIF, CNRS, Universit\'e Paris Cit\'e and $\pi r^2$ team, Inria, France.}
\email{curien@irif.fr}
\author{Guillaume Laplante-Anfossi}
\address{School of Mathematics and Statistics, The University of Melbourne, Victoria, Australia.}
\email{guillaume.laplanteanfossi@unimelb.edu.au}
\date{\today}
\subjclass[2010]{Primary 18N20, Secondary 52B11} 
\keywords{Categorified operads, categorical coherence, Seifert--Van Kampen theorem, polytopes, MacLane coherence theorem, rewriting theory, Morse theory.}
\dedicatory{"We shall construct $KP_n$, as a CW-complex, in Section 2 and show that it is an $(n-1)$-ball. This gives an instant one-step proof of MacLane's theorem in full generality."  \\ --  Mikhail M. Kapranov}
\thanks{The second author was supported by the Andrew Sisson Fund and the Australian Research Council Future Fellowship FT210100256.}
\begin{document}

\begin{abstract}
We give a short topological proof of coherence for categorified non-symmetric operads by using the fact that the diagrams involved form the 1-skeleton of simply connected CW complexes. 
We also obtain a ``one-step'' topological proof of Mac Lane's coherence theorem for symmetric monoidal categories, as suggested by Kapranov in 1993.  
Our analysis is based on a notion of combinatorial homotopy, which we further study in the special case of polyhedral complexes, leading to a second geometrical proof of coherence which is very close to Mac Lane's original argument.  
We use Morse theory to show that this second method is (strictly) less general than the first.
We provide a detailed analysis of how  both methods allow us to deduce these two categorical coherence results and discuss possible generalizations to higher categories. 
\end{abstract}

\maketitle

\setcounter{tocdepth}{1}


\section*{Introduction} 
\label{s:introduction}

The $n$-dimensional permuto-associahedron, a CW-complex whose faces are in bijection with parenthesized ordered partitions of $n+1$ letters, was first introduced by M. Kapranov in his study of higher dimensional Yang--Baxter equations, through the moduli spaces of curves $\overline{\mathcal{M}_{0,n+1}}(\R)$ and the solutions of the Knizhnik--Zamolodchikov equation \cite{kapranov1993}.
It was later realized as a convex polytope by V. Reiner and G. M. Ziegler \cite{reinerCoxeterassociahedra1994}, and more recently through the nested braid fan by F. Castillo and F. Liu in \cite{CastilloLiu21}.

The present study stems from a desire to understand the epigraph, taken from the introduction of \cite{kapranov1993}: what is the precise relationship between the permuto-associahedron and Mac Lane's coherence theorem for symmetric monoidal categories? 
We show that the \emph{simple connectedness} of the former implies the latter, thereby refining and proving Kapranov's claim (see \cref{thm:coherence-MacLane}). 

This is done through a general ``topological coherence theorem" which applies to any simply connected, regular CW complex (\cref{thm:top-coherence}).
Applying it to the operahedra, another family of polytopes which encodes categorified non-symmetric operads~\cite{DP15,curienSyntacticAspectsHypergraph2019a,laplante-anfossiDiagonalOperahedra2022a}, we obtain a ``one-step'' proof of the associated coherence theorem as well.

\smallskip
There is little price to pay, though. 
For both theorems, one needs to provide a precise bijective correspondence between the 1-skeleton (resp.\ the 2--cells) on the topological side, and canonical morphisms (resp.\ bifunctoriality, naturality, and applications of coherence conditions) on the categorical side (\cref{{bijections-Kapranov},prop:bijection-nestings}). 
Since the 2-skeleton of the permuto-associahedra corresponds to other basic canonical morphisms and coherence conditions than those of Mac Lane (hexagons and naturality of the involutive braiding on one hand versus dodecagons on the other hand), one needs to show that the two presentations are equivalent, which is non-trivial, see~\cref{rem:Kapranov-to-MacLane}.  
There is yet a third equivalent presentation (and hence another proof of coherence) due to D. Barali\'c, J. Ivanovi\'c and D. Petri\'c~\cite{baralicSimplePermutoassociahedron2019}, that matches the 2-skeleton of a different polytope, which unlike the permuto-associahedron is simple, see \cref{rem:simple-permutoassociahedron}.

\smallskip
We further investigate a topological incarnation of Mac Lane's original argument, in the spirit of rewriting theory.  
We study polyhedral complexes endowed with a generic orientation vector, or equivalently a Morse function in the sense of \cite{bestvinaMorseTheoryFiniteness1997}, whose $1$-skeletons naturally feature terminating and confluent rewriting systems (\cref{prop:terminating-confluent}).
We focus on the family of simply connected polyhedral complexes whose outgoing links are connected. 
The study of directed paths on their $1$-skeleton leads to a second general proof of coherence (\cref{p:second-proof}).
In particular, this second theorem can be applied to all polytopes, allowing us to give a second, ``rewriting-theoretic'' proof of both previously mentioned coherence results.  
In the case of operahedra, our rewriting proof simplifies the original proof of Do{\v s}en and Petri{\'c}~\cite{DP15}, see~\cref{rem:DPLA}.

It is worth noting that, while the above polyhedral complexes admit \emph{abstract} rewriting systems on their $1$-skeleton,
the family of operahedra (which includes the associahedra, encoding non-symmetric monoidal categories) further admits \emph{term} rewriting systems, which exhibit more structure and are the subject of a companion paper~\cite{CLA24}. In contrast, we shall argue that the abstract rewriting approach to \emph{symmetric} monoidal categories is not informative, see \cref{MacLane-Kapranov-Simple}.

Using Morse theory on affine cell complexes \cite{bestvinaMorseTheoryFiniteness1997}, we relate our two approaches by showing that the second is (strictly) less general than the first (\cref{lemma:outgoing-link}).
 
\smallskip
Our two general topological coherence theorems can be used to prove other categorical results where polytopes appear, such as coherence for monoidal functors between monoidal categories \cite{epsteinFunctorsTensoredCategories1966}, see \cref{sec:further}.
They also shed light on some statements in the literature, such as the proof of \cite[Prop.~3.9]{KapranovVoevodsky94}, see \cref{sec:higher}.
This all points towards further investigation of the relationship between $n$-categorical coherence and $n$-connectedness of appropriate spaces.
While topological proofs of $2$-categorical coherence already appeared in \cite{Gurski11}, higher dimensional results have been obtained recently by S. Barkan in the context of $\infty$-operads~\cite{barkanArityApproximationInfty2022}, for which the present results could well be the strict, $n=1$ case.


\section{Topological coherence} 
\label{s:polycoherence}


\subsection{Coherence \`a la Van Kampen}

Let $X$ be a regular CW complex, and let $X_k$, $k\geq 0$ denote its $k$-skeleton. 
For an edge $e$ of $X$, denote its attaching map $f_e : \mathbb{S}^0 \to X_0$.
Consider the category $\mathcal{A}(X)$ with set of objects $X_0$, and generating morphisms $\alpha_e: f_e(-1) \to f_e(1)$ and $\alpha_e^{-1}: f_e(1) \to f_e(-1)$ for each edge $e \in X_1$.
A \defn{combinatorial path} on $X$ is a composable sequence of $\alpha$ and $\alpha^{-1}$ morphisms (a \emph{word} in $\alpha$ and $\alpha^{-1}$).
Two combinatorial paths $\gamma, \gamma' \in \mathcal{A}(X)(x,y)$ with the same endpoints are said to be \defn{parallel}.

Let $A$ be a $2$-cell of $X$, let $f_A : \mathbb{S}^1 \to X_1$ be its attaching map, and $x\in X_0$ be a vertex  in the image of $f_A$. Then $f_A$ defines a morphism $\gamma_A \in \mathcal{A}(X)(x,x)$, given by the sequence of edges $e_1,\ldots,e_n$ in its image starting at~$x$ and respecting the anti-clockwise orientation of $\mathbb{S}^1$.
Here, one selects $\alpha_{e_i}$ if the orientation of $f_A$ restricted to $e_i$ agrees with the one of $f_{e_i}$, and $\alpha_{e_i}^{-1}$ otherwise.
Two parallel combinatorial paths $\gamma, \gamma'$ are said to be \defn{elementary combinatorially homotopic} if they differ exactly by a relation of the form $\alpha_e \alpha_e^{-1}=\id_{f_e(1)}$ or $\alpha_e^{-1} \alpha_e=\id_{f_e(-1)}$, or of the form
 $\gamma_A = \id_{x}$, for some $2$-cell $A$ and vertex $x$ as above.
That is, one can rewrite $\gamma$ into $\gamma'$ or $\gamma'$ into $\gamma$ by replacing some (possibly empty) subword of $\gamma$ with an equivalent subword using a relation $\gamma_A = \id_x$.
More generally, two parallel combinatorial paths are \defn{combinatorially homotopic} if they are related by a sequence of elementary combinatorial homotopies.

The quotient of the category $\mathcal{A}(X)$ by the relations $\alpha \alpha^{-1}=\alpha^{-1}\alpha=\id$ is the free groupoid $\mathcal{F}(X)$ generated by the $\alpha$ morphisms.
Let $\mathcal{C}(X)$ denote the further quotient of the groupoid $\mathcal{F}(X)$ by the relations $\gamma_A=\id_x$ for some choice of $x$, for each $2$-cell $A$ of $X$.
In other words, $\mathcal{C}(X)$ is the quotient of $\mathcal{A}(X)$ by the combinatorial homotopy equivalence relation.
Note that the definition of $\mathcal{C}(X)$ does not depend on the choice of $x$, for every $2$-cell $A$.
Indeed, if $x'\neq x \in A_0$ defines a relation $\gamma_A'=\id_{x'}$, we have $\gamma_A'=\delta \gamma_A \delta^{-1}$ in $\mathcal{F}(X)$, where $\delta$ is the morphism in  $\mathcal{A}(X)(x,x')$ induced by $\gamma_A$. 
Thus, a path~$\gamma$ can be rewritten into $\gamma'$ using $\gamma_A=\id_x$ if and only if it can be rewritten using $\gamma_A'=\id_{x'}$.

Let $\Pi(X)$ denote the \defn{fundamental groupoid }of $X$, that is the groupoid with objects the points of $X$ and morphisms the homotopy classes of paths between them.

\begin{thm}
\label{thm:top-coherence}
    Any two parallel combinatorial paths on $X$ are combinatorially homotopic if and only if every path component of $X$ is simply connected.
\end{thm}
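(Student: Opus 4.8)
The plan is to identify combinatorial homotopy with genuine homotopy rel endpoints, and then read the statement off from the definition of simple connectedness. First I would note that elementary combinatorial homotopy generates a congruence on the free groupoid $\mathcal{F}(X)$: it is symmetric by construction, and it is compatible with composition since a local rewrite inside a word can be performed inside any larger word (here the regularity of $X$ is convenient, as it guarantees that each $\gamma_A$ is a genuine cyclic word in pairwise distinct edges). Hence the quotient $\mathcal{G}(X) := \mathcal{F}(X)/{\simeq}$ is again a groupoid with object set $X_0$, and $\mathcal{G}(X)(x,y)$ is exactly the set of combinatorial homotopy classes of combinatorial paths $x \to y$. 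With this language, "any two parallel combinatorial paths are combinatorially homotopic" says precisely that every hom-set of $\mathcal{G}(X)$ has at most one element.

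Next I would build a functor $\Phi \colon \mathcal{G}(X) \to \Pi_1(X)$ to the fundamental groupoid of the space $X$, equal to the identity on objects and sending each generating $1$-cell $\alpha$ to the homotopy class of the path it traces. By freeness this is well defined on $\mathcal{F}(X)$, and it descends to $\mathcal{G}(X)$ because for each $2$-cell $A$ the loop $\gamma_A$ bounds the disk $A$ and is therefore null-homotopic in $X$. The crux is that $\Phi$ restricts to an isomorphism $\mathcal{G}(X)(x,y) \xrightarrow{\ \sim\ } \Pi_1(X)(x,y)$ for all vertices $x,y \in X_0$. Surjectivity is cellular approximation: any path between vertices is homotopic rel endpoints to an edge-path in $X_1$. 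Injectivity is the substantive point: if an edge-path is null-homotopic in $X$, then, since attaching cells of dimension $\geq 3$ does not change $\pi_1$, it is already null-homotopic in $X_2$, and a null-homotopy in $X_2$ can be made cellular and cut into finitely many elementary moves, each pushing the path across a single $2$-cell, i.e. an elementary combinatorial homotopy. This is exactly the content of the Seifert--Van Kampen theorem in groupoid form (equivalently, the standard presentation of $\pi_1$ of a $2$-complex by generators the $1$-cells and relators the boundaries of the $2$-cells), which I would invoke rather than reprove. I expect this faithfulness step to be the main obstacle to full rigor — essentially all the work sits here.

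Finally I would assemble the equivalence. Every nonempty path component of $X$ contains a vertex (the closure of any cell meets $X_0$), and any two vertices in the same path component are joined by an edge-path since the $1$-skeleton of a connected CW complex is connected; thus $\mathcal{F}(X)(x,y) \neq \emptyset$ exactly when $x$ and $y$ lie in the same path component $C$, in which case $\mathcal{G}(X)(x,y) \cong \Pi_1(C)(x,y)$ is a torsor over $\pi_1(C,x)$. If every path component is simply connected, this torsor is over the trivial group and hence has exactly one element, while for $x,y$ in different components the hom-set is empty; either way parallel combinatorial paths are combinatorially homotopic. Conversely, if some path component $C$ is not simply connected, pick a vertex $x_0 \in C$: then $\mathcal{G}(X)(x_0,x_0) \cong \pi_1(C,x_0)$ is nontrivial, so some combinatorial loop at $x_0$ is not combinatorially homotopic to $\id_{x_0}$, exhibiting two parallel combinatorial paths that are not combinatorially homotopic. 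This gives both implications.
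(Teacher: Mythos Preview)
Your proof is correct and follows essentially the same route as the paper: both arguments identify the quotient of $\mathcal{F}(X)$ by combinatorial homotopy with the fundamental groupoid of $X$, via the Seifert--Van Kampen theorem applied to the $2$-skeleton together with the fact that attaching cells of dimension $\geq 3$ does not change $\pi_1$. The only difference is organizational---the paper chains three cited isomorphisms $\Pi(X_1)\cong\mathcal{F}(X)$, $\Pi(X_2)\cong\mathcal{C}(X)$, $\Pi(X_2)\cong\Pi(X)$, whereas you build the comparison functor directly and check it is bijective on hom-sets---but the substance is the same.
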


\begin{proof}
    For $Y \subseteq X$, let us write $\Pi(X)Y$ for the full subcategory of the fundamental groupoid of $X$ spanned by $Y$.
    Then, we have an isomorphism of groupoids \[ \Pi(X)X_0 \cong \mathcal{C}(X) \ . \]
    To show this, one proceeds in three steps. 
    First, one shows that the fundamental groupoid $\Pi(X_1)X_0$ of the $1$-skeleton of $X$ is free on the homotopy classes of maps generated by the attaching maps of the $1$-cells, that is, free on the $\alpha$-morphisms \cite[9.1.5]{Brown2006}.
    Thus, one gets $\Pi(X_1)X_0 \cong \mathcal{F}(X)$. 
    Second, one shows that the fundamental groupoid $\Pi(X_2)X_0$ of the $2$-skeleton of $X$ is the free groupoid $\Pi(X_1)X_0$ modulo the relations $\gamma_A=1$, for $A$ a $2$-cell of $X$ \cite[9.1.6]{Brown2006}. 
    This is done through repeated application of the Seifert--Van Kampen theorem; one then has $\Pi(X_2)X_0 \cong \mathcal{C}(X)$.
    Third, one shows that the inclusion of $X_2$ in $X$ induces an isomorphism of fundamental groupoids $\Pi(X_2)X_0 \cong \Pi(X)X_0$ \cite[9.1.7]{Brown2006}, which concludes the proof of the isomorphism $\Pi(X)X_0 \cong \mathcal{C}(X)$.
    The theorem then follows, since every path component of $X$ is simply connected if and only if its fundamental groupoid $\Pi(X)$ is trivial, which holds if and only if its full subcategory $\Pi(X)X_0$ is trivial.  
\end{proof}

Note that any CW complex is locally path connected, and therefore is connected if and only if it is path connected.
Therefore, we could have replaced in the preceding theorem ``path component'' by ``connected component''.

Let us say that $X$ is \defn{combinatorially connected} if there is a combinatorial path between any two vertices of $X$. 
In the course of the preceding proof, we have in particular showed the following.
\begin{corollary}
    \label{cor:combinatorially-connected}
    A regular CW complex $X$ is combinatorially connected if and only it is connected.
\end{corollary}


\subsection{Coherence \`a la Morse}

Let $X\subset \R^n$ be a polyhedral complex. 
Let $\vec v \in \R^n$ be \defn{generic} on the edges of $X$, meaning that for any pair of vertices $x,y \in X$ belonging to the same edge of $X$, we have $\langle \vec v , x \rangle \neq \langle \vec v, y\rangle$.  
Such a generic vector $\vec v$ induces a natural orientation on the edges of $X$, directed from the source vertex where the functional $\langle \vec v, - \rangle$ is minimal to the target vertex where it is maximal. 

One of the basic, very useful facts about polyhedral complexes is that, for any face $F \subseteq X$ of $X$, there is a unique \defn{source} vertex $\so(F)$ such that all its adjacent edges $e \subseteq F$ are outgoing, and a unique \defn{sink} vertex $\sk(F)$ whose adjacent edges are all incoming, see \cite[Thm.~3.7]{Ziegler95}.
More generally  a vertex whose adjacent edges $e \subseteq X$ are all incoming is called a \defn{local sink}, and when $X$ has only one such vertex, we call it \defn{global sink} and denote it by $\sk(X)$.

Let $H:=\{y \in \R^n \ | \ \langle \vec v , y \rangle = 0\}$ be the linear hyperplane orthogonal to $\vec v$.  
For every vertex $x \in X$, choose $\varepsilon >0$ such that the interval between $\langle \vec v , x \rangle$ and $\langle \vec v , x \rangle + \varepsilon$ does not contain the image of any other vertex under the ``height" function $\langle \vec v, - \rangle$. 

\begin{definition}
    The \defn{outgoing link} $\oLk(x,X)$ of a vertex $x \in X$ is the intersection $\mathcal{F} \cap (H+x+\varepsilon \vec v)$ of the family of faces $\mathcal{F}(x,X):=\{ F \subseteq X \ | \ \so(F)=x \}$ with the affine hyperplane $H+x+\varepsilon \vec v$. 
\end{definition}

Recall from \cite[Sec.~2.1]{Ziegler95} that the \defn{vertex figure} $P/x$ of a polytope $P$ at a vertex $x$ is obtained by cutting $P$ by a hyperplane that cuts off the single vertex $x$. 
Such a cut establishes a bijection between the $(k-1)$-faces of $P/x$ and the $k$-faces of $P$ which contain $x$ \cite[Prop.~2.4]{Ziegler95}.

\begin{lemma}
    \label{l:vertex-figure}
    For any $k\geq 0$, there is a bijection between the $k$-faces of $\mathcal{F}(x,X)$ and the $(k-1)$-faces of $\oLk(x,X)$.
\end{lemma}
\begin{proof}
    Each maximal face of $\mathcal{F}(x,X)$ with respect to inclusion is a polytope $P$, for which the intersection $P \cap (H+x+\varepsilon \vec v)$ is the vertex figure $P/x$ of $P$ at $x$. 
    By \cite[Prop.~2.4]{Ziegler95}, there is a bijection between the $k$-faces of $P$ and the $(k-1)$-faces of $P/x$.
    Collecting these bijections for all maximal faces of $\mathcal{F}(x,X)$, and making the appropriate identifications, we get the desired global bijection.
\end{proof}

In this section we shall focus on polyhedral complexes whose outgoing links are connected. 
The following proposition gives the topological significance of this condition.

\begin{proposition}
    \label{lemma:outgoing-link}
    Let $X$ be a polyhedral complex.
    If there is a generic vector $\vec v \in \R^n$ such that the outgoing link of every vertex is connected, then every path component of $X$ is simply connected.
\end{proposition}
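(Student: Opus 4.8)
The plan is to reduce to the hypotheses of \cref{p:second-proof} and then invoke \cref{thm:top-coherence}. Since simple connectedness, connectedness of outgoing links, and genericity of $\vec v$ are all detected one path component at a time (the faces, edges and outgoing links of a path component $Y$ are exactly those of $X$ lying in $Y$), it suffices to treat a single path component; so I may assume $X$ is connected. I also assume $X$ is finite, which covers all our applications (polytopes and their subdivisions); the general case is discussed at the end.

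The crux is to show that a connected $X$ with all outgoing links connected has a \emph{unique} sink; then \cref{p:second-proof} applies. For a vertex $x$, let $S(x)$ be the set of sinks reachable from $x$ along an oriented path. As $X$ is finite and $\langle\vec v,-\rangle$ increases strictly along oriented edges, following outgoing edges terminates at a sink, so $S(x)\neq\emptyset$. I claim $\abs{S(x)}=1$, proved by induction on $m(x)$, the maximal length of an oriented path starting at $x$. If $m(x)=0$ then $x$ is a sink and $S(x)=\{x\}$. If $m(x)\geq1$, then $S(x)=\bigcup_{x\to a}S(a)$ over outgoing edges $x\to a$, and since $m(a)<m(x)$ each $S(a)$ is a singleton by induction; it remains to see they all coincide. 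If two outgoing edges $x\to a$ and $x\to a'$ lie on a common $2$-face $F$, then $\so(F)=x$ and the two monotone boundary paths of $F$ give oriented paths $a\to\sk(F)$ and $a'\to\sk(F)$; hence $S(\sk(F))\subseteq S(a)\cap S(a')$, and as $m(\sk(F)),m(a),m(a')<m(x)$ all three sets are singletons and thus equal. In general, connectedness of the outgoing link of $x$ gives connectedness of its $1$-skeleton, which has one vertex per outgoing edge at $x$ and an edge joining the two $x$-edges of each $2$-face $F$ with $\so(F)=x$; so any two outgoing edges at $x$ are linked by a chain of such $2$-faces, along which the previous case propagates $S(a)=S(a')$. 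This proves $\abs{S(x)}=1$. Finally, if $x\to x'$ is an oriented edge then $S(x')\subseteq S(x)$, so $S(x')=S(x)$; thus $S$ is constant along oriented edges, hence on the connected complex $X$, and its common value $\{s\}$ makes $s$ the unique sink.

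With the unique sink, $X$ meets hypotheses (i) and (ii) of \cref{p:second-proof}, so any two parallel combinatorial paths on $X$ are combinatorially homotopic; by \cref{thm:top-coherence} applied to the connected complex $X$, this means $X$ is simply connected. Running the argument over every path component of the original complex completes the proof.

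I expect the only real content to be the inductive step — using connectedness of the outgoing link of $x$, through the chain of $2$-faces, to force every outgoing direction at $x$ to flow to the same sink; this is in effect Newman's lemma for the rewriting system of the preceding remark, with termination supplied by finiteness. The sole loose end is that finiteness hypothesis, needed for $S(x)$ to be nonempty and the induction well-founded. For arbitrary polyhedral complexes one argues instead via Bestvina--Brady Morse theory \cite{bestvinaMorseTheoryFiniteness1997}: connectedness of all outgoing (ascending) links implies that, as $t$ decreases, each superlevel set $\set{\langle\vec v,-\rangle\geq t}$ is built from the previous one by coning off, at each vertex $v$, a copy of the outgoing link of $v$ sitting inside it — an operation which for a nonempty (connected) link can only add relations to $\pi_1$ and for an empty link adds a contractible component — so every superlevel set, and in particular $X$, is simply connected on each path component.
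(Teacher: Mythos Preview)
Your primary argument is correct (for finite complexes) and takes a genuinely different route from the paper. The paper proves the proposition directly via Bestvina--Brady Morse theory: the height function $\langle\vec v,-\rangle$ is a Morse function on $X$, and as one sweeps down through the heights, each superlevel set $X_t$ is obtained from the previous one by coning off the outgoing link of the new vertex; since coning off a nonempty connected space preserves simple connectedness (an application of Seifert--Van Kampen), induction gives that every path component of $X$ is simply connected. This is precisely the argument you sketch in your final paragraph as the fallback for the infinite case. Your main argument instead closes a loop \emph{within} the paper's own machinery: you establish the existence of a unique sink (Newman's lemma, as you note), feed it into \cref{p:second-proof}, and then read off simple connectedness from the reverse implication of \cref{thm:top-coherence}. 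This is more elementary in that it avoids any external appeal to Morse theory, and it makes transparent that \cref{lemma:outgoing-link} is already latent in the combination of \cref{thm:top-coherence} and \cref{p:second-proof}. The cost is the finiteness hypothesis, which the Morse-theoretic proof does not need; note, incidentally, that \cref{p:second-proof} (through \cref{l:oriented}) itself inducts on the maximal length of an oriented path, so some finiteness is implicitly at work there as well.
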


\begin{proof}
    Let $\vec v \in \R^n$ be generic with respect to $X$, and suppose that the outgoing link of every vertex is connected. 
    Since $\vec v$ is generic on edges, it defines a Morse function $\langle \vec v , -\rangle$ on $X$, in the sense of \cite[Def.~2.2]{bestvinaMorseTheoryFiniteness1997}.
    As in classical Morse theory, one can determine the homotopy type of $X$ by considering its successive level sets. 
    For $t \in \R$ denote by $X_t$ the closed subspace of $X$ containing points $x$ such that $\langle x, \vec v \rangle$ is at least $t$.
    Let $x$ be a vertex of $X$ of height $h=\langle x, \vec v \rangle$.
    Observe first that $X_{h+\epsilon}$, for some small $\epsilon>0$, is homotopy equivalent to $X_{h'}$ where $h' > h$ is the next greater height at which there is a vertex.
    That is, the homotopy type of $X$ can only change at vertices  \cite[Lem.~2.3]{bestvinaMorseTheoryFiniteness1997}.
    Then, one proves that $X_h$ is homotopy equivalent to the pushout of $X_{h+\epsilon}$ with the cone over the outgoing link of $x$ along the outgoing link of $x$  \cite[Lem.~2.5]{bestvinaMorseTheoryFiniteness1997}.
    By our assumption, the outgoing link of $x$ is connected, and thus the cone over it is simply connected. 
    Since the pushout of simply connected spaces over a connected space is always simply connected (this is an application of the Seifert--Van Kampen theorem), we obtain by induction that every path component of $X$ is simply connected \cite[Point (3) of Cor.~2.6]{bestvinaMorseTheoryFiniteness1997}.
\end{proof}

The converse of \cref{lemma:outgoing-link} is not true in general: many simply connected polyhedral complexes, as the one represented in \cref{fig:outgoingpoly}, have disconnected outgoing links, for many (sometimes for all) choices of generic orientation vectors. 

\begin{figure}[h!]
\centering
\resizebox{0.4\linewidth}{!}{
\begin{tikzpicture}
    \node[regular polygon,
    draw,
    regular polygon sides = 8, minimum size = 3cm] (p) at (0,0) {};
    \draw[-] (p.202.5)--(180:4)--(p.157.5);
    \draw[-] (p.157.5)--(135:4)--(p.112.5);
    \draw[-] (p.112.5)--(90:4)--(p.67.5);
    \draw[-] (p.67.5)--(45:4)--(p.22.5);
    \draw[-] (p.22.5)--(0:4)--(p.-22.5);
    \draw[-] (p.-22.5)--(-45:4)--(p.-67.5);
    \draw[-] (p.-67.5)--(-90:4)--(p.-112.5);
    \draw[-] (p.-112.5)--(-135:4)--(p.-157.5);
\end{tikzpicture}}
\caption{A simply connected polyhedral complex which admits disconnected outgoing links for every choice of generic vector.}
\label{fig:outgoingpoly}
\end{figure}
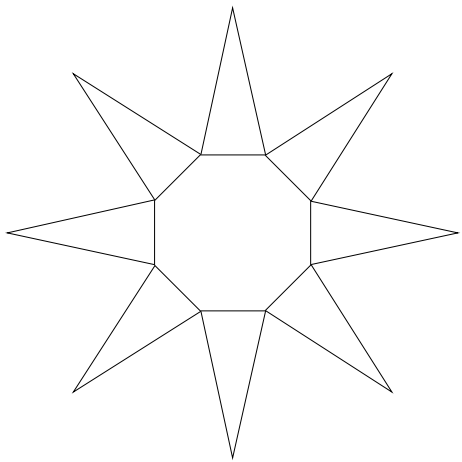

An important class of complexes which have connected outgoing links are polytopes, which will be our main object of study in the next sections.

\begin{proposition}
\label{prop:polytopes}
    Let $P \subset \R^n$ be a polytope, and let $\vec v \in \R^n$ be generic with respect to $P$. 
    Then, the outgoing link of every vertex of $P$ is connected.
\end{proposition}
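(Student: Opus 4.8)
The plan is to establish the two assertions — existence of a unique sink and connectedness of every outgoing link — using the linear-programming structure of a polytope. For the first claim, recall that the height function $\langle \vec v, - \rangle$ is a linear functional; restricted to the compact convex set $P$ it attains a maximum on a face, and since $\vec v$ is generic on edges this face cannot contain an edge, hence it is a single vertex. That vertex has all adjacent edges incoming (any outgoing edge would lead to a strictly higher vertex, contradicting maximality), so it is a global sink. Uniqueness is immediate: any global sink is a local maximum of a linear functional on a polytope, and by convexity a local maximum of a linear functional on a polytope is the global maximum, which we have just seen is unique.

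For the second claim, fix a vertex $x$ and consider its outgoing link $\mathcal{L}$, the slice of the union of faces $F$ with $\so(F)=x$ by the affine hyperplane $H + x + \varepsilon \vec v$. The key observation is that the faces $F$ with $\so(F) = x$ are exactly the faces on which, near $x$, the functional increases in every edge direction out of $x$; equivalently, the edges of $P$ at $x$ pointing "upward" (in the $\vec v$-direction) span a polyhedral cone $C_x$ at $x$, and the faces $F$ with $\so(F)=x$ correspond to the faces of this cone. Slicing $C_x$ by the transverse affine hyperplane $H + x + \varepsilon\vec v$ for small $\varepsilon$ yields a bounded polytope (a polytopal slice of the cone of upward edge-directions), and a nonempty polytope is connected — indeed contractible. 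So $\mathcal{L}$ is connected.

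The step I expect to be the main obstacle is making precise that the slice of $\mathcal{F} = \{F \subset P \mid \so(F) = x\}$ by the affine hyperplane is genuinely the boundary slice of a single polyhedral cone, rather than some disconnected union: one must check that if an edge $e$ at $x$ is outgoing then every face $F \ni e$ with $\so(F)=x$ sits inside the cone generated by the outgoing edges at $x$, and conversely that this cone's proper faces are exactly swept out by such $F$. This is a local analysis of the vertex figure of $P$ at $x$ together with the sign pattern of $\langle \vec v, -\rangle$ on its rays; the genericity of $\vec v$ on edges ensures no ray of the vertex figure is "horizontal", so the rays split cleanly into outgoing and incoming, and the outgoing ones span a closed cone whose transverse slice is a (possibly lower-dimensional but always nonempty, since $x$ is not the global sink) polytope. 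Once this identification is in hand, connectedness of $\mathcal{L}$ follows from convexity, and the proposition is proved. One should also dispatch the trivial case $x = \sk(P)$, where there are no outgoing edges and the outgoing link is empty — by convention this is not an obstruction to the hypotheses of \cref{p:second-proof}, which only concern vertices that are sources of some edge.
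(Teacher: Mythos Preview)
Your argument for the unique sink is correct and matches the paper's (which simply cites \cite[Theorem~3.7]{Ziegler95} for this standard fact).

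The genuine gap is in your treatment of the outgoing link. Your key claim --- that the faces $F$ with $\so(F)=x$ correspond to the faces of the cone $C_x$ spanned by the outgoing edge-directions, so that the outgoing link is the convex slice of $C_x$ --- fails whenever $P$ is not simple at $x$. Concretely, take $x$ a vertex whose tangent cone is the cone over a square, with extreme rays $r_1,r_2,r_3,r_4$ in cyclic order (e.g.\ any vertex of the octahedron), and choose $\vec v$ so that $r_1,r_2,r_3$ are outgoing and $r_4$ is incoming. Then $C_x=\mathrm{cone}(r_1,r_2,r_3)$ is $3$-dimensional and has $\mathrm{cone}(r_1,r_3)$ as a $2$-face; but $r_1$ and $r_3$ are non-adjacent in the square, so $\mathrm{cone}(r_1,r_3)$ is \emph{not} a face of $T_xP$ and does not come from any face of $P$. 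The actual outgoing link here is the broken line $[r_1',r_2']\cup[r_2',r_3']$ in the slice --- two edges of a triangle, not the full triangle that slicing $C_x$ gives. So the outgoing link is not convex, and your convexity argument does not apply to it. The ``identification'' you flag as the main obstacle is not merely delicate to verify; it is false in general.

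The paper's route avoids this by working with the vertex figure $P/x$, which is itself a polytope of dimension $\dim P-1$, rather than with the cone $C_x$ (which forgets the face structure of $T_xP$). The outgoing link is then, combinatorially, the subcomplex of $P/x$ spanned by those vertices lying in the half-space $H^+$; connectedness follows because cutting a convex polytope by a hyperplane gives two convex pieces (equivalently, by the monotone-path argument on the $1$-skeleton of $P/x$). Your instinct to look at the vertex figure was right; the fix is to use it directly rather than passing through $C_x$.
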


\begin{proof}
    Define the linear hyperplane $H:=\{y \in \R^n \ | \ \langle \vec v, y \rangle = 0\}$, and consider the two half-spaces $H^{-}:=\{y \in \R^n \ | \ \langle \vec v, y \rangle < 0\}$ and $H^{+}:=\{y \in \R^n \ | \ \langle \vec v, y \rangle < 0\}$.
    Since $\vec v$ is not perpendicular to any edge of $P$, it defines a partition of the vertices of the vertex figure $P/x$ into two connected components: the vertices that lie in $H^{-}$, which correspond to incoming edges of $P$ at $x$, and the vertices that lie in $H^{+}$, which correspond to outgoing edges of $P$ at $x$.
    Thus, the outgoing link of $x$ is connected, and the proof is complete.
\end{proof}

From now on we shall suppose that the polyhedral complexes $X$ that we consider are endowed with a regular CW structure and provided with a generic vector $\vec v$.
Combining \cref{lemma:outgoing-link} with \cref{thm:top-coherence}, we have that any polyhedral complex $X$ whose outgoing links are connected satisfies the property that ``any two parallel combinatorial paths on $X$ are combinatorially homotopic''.
We shall now derive this same result by following an alternative, more combinatorial path (indeed!), getting close to the proof of \cite[Thm~3.1]{MacLane63}.

A combinatorial path $\gamma$ on $X$ is \defn{directed} if for any pair $(e, f)$ of consective edges in~$\gamma$, we have that $\sk(e)=\so(f)$.  
When no ambiguity arises, we will omit the adjective ``combinatorial" and say only ``directed path".

In the rest of this section we shall use the notion of combinatorial connectedness, which as we have seen in \cref{cor:combinatorially-connected} is equivalent to connectedness for the spaces we consider.
\begin{lemma}
    \label{l:unique-sink}
    Let $X$ be a polyhedral complex with generic vector $\vec v$ such that the outgoing link of every vertex is combinatorially connected. 
    Let $e,e'$ be two edges of $X$ such that $\so(e)=\so(e')$, and suppose that there are directed paths from $\sk(e)$ and $\sk(e')$ to local sinks~$s$ and~$s'$, respectively. 
    Then, we have $s=s'$.
\end{lemma}
\begin{proof}
    Define the \emph{height} $\frakh(x)$ of a vertex $x$ as the length of the longest directed path in $X$ starting at $x$.
    Since $\vec v$ is generic and $X_0$ is finite, this is well-defined.
    We proceed by induction on $\frakh(x)$.
    The statement holds vacuously for vertices $x$ such that $\frakh(x)=0$.
    Suppose that the assertion above holds for all vertices $x \in X$ such that $\frakh(x)=n$, and consider an $x$ with $\frakh(x)=n+1$.
    Since the outgoing link $\oLk(x,X)$ is combinatorially connected, there is a combinatorial path $\theta$ in $\oLk(x,X)$ between the vertices corresponding to $e$ and $e'$ (\cref{l:vertex-figure}).
    $\theta$ determines a sequence of edges $e_0 \eqdef e, e_1, \ldots, e_k, e' \defeq e_{k+1}$ of $X$ with $\so(e_i)=x$ for all $0 \leq i \leq k+1$.
    Moreover, each consecutive pair $e_i,e_{i+1}$ determines a $2$-face~$F_{i+1}$ of $X$.
    Now, choose for each $e_i$ with $1 \leq i \leq k$, a directed path of maximal length starting at $\sk(e_i)$ and passing through $\sk(F_{i})$.
    Each of these paths ends at a local sink $s_i$, including $s_0 \eqdef s$ and $s_{k+1} \eqdef s'$. 
    Since we have $\frakh(\sk(e_i))<\frakh(x)$ for all $0 \leq i \leq k+1$, we can apply induction to the two directed paths from $\sk(e_i)$ to $s_i$ and $s_{i+1}$, which gives~$s_i=s_{i+1}$.
    Therefore, we have $s=s_0=s_1=\cdots =s_k=s_{k+1}=s'$, as desired.
\end{proof}

Two parallel directed paths are said to be \defn{elementary combinatorially homotopic} if they are as undirected paths. 
They are \defn{combinatorially homotopic} if they are related by a sequence of elementary combinatorial homotopies between directed paths. 

The following \cref{l:oriented} and its consequence \cref{p:second-proof} expresses in topological terms the original proof technique used by Mac Lane in \cite[Thm~3.1]{MacLane63}.
Note that this \cref{l:oriented} involves first \emph{directed} paths, while \cref{p:second-proof} treats the general, undirected case.

\begin{proposition}
\label{l:oriented}
    Let $X$ be a polyhedral complex, and let $\vec v$ be generic on the edges of $X$. 
    Consider the following three properties:
    \begin{enumerate}
        \item[(i)] the outgoing link of every vertex is combinatorially connected,
        \item[(ii)] there is a global sink in every connected component,
        \item[(iii)] any two parallel \emph{directed} combinatorial paths on $X$ are combinatorially homotopic.
    \end{enumerate}
    Then, $X$ satisfies (i) if and only if it satisfies (ii) and (iii).
\end{proposition}

\begin{proof}
First, we prove that (i) implies (ii). 
    Suppose that there are two local sinks~$s_1$ and~$s_2$ in the same connected component of $X$.
    Consider a combinatorial path $\gamma$ between~$s_1$ and~$s_2$, whose existence is garanteed by \cref{cor:combinatorially-connected}.
    We proceed by induction on the number of \emph{peaks} in $\gamma$, that is the number of vertices $x$ which are the source $\so(e)=x=\so(e')$ of two edges $e,e'$ of $\gamma$.
    The path $\gamma$ has at least a peak, otherwise $s_1$ and $s_2$ would not be both local sinks. 
    If $\gamma$ has a unique peak, \cref{l:unique-sink} implies that $s_1=s_2$.
    Now suppose that for any $k \leq n$, if $\gamma$ has $k$ peaks, then we have $s_1=s_2$. 
    If $\gamma$ has $n+1$ peaks, consider the first peak $x=\so(e)=\so(e')$ of $\gamma$. 
    By \cref{l:unique-sink}, there is a directed path $\delta$ from $\sk(e')$ to $s_1$. 
    Replacing the initial section of $\gamma$ ending in $e’$ by $\delta$, we get a path with $n$ peaks, and by the induction hypothesis we get $s_1=s_2$, completing the proof.

   Second, we prove that (i) implies (iii).
    Let us assume that $X$ is connected, otherwise we apply the same reasoning to each connected component. 
    From the preceding paragraph, we know that $X$ has a global sink $\sk(X)$.
    Suppose that the outgoing link of every vertex is combinatorially connected. 
    Let $\gamma$ and $\gamma'$ be two parallel directed paths between two vertices $x$ and $y$. 
    We prove that they are combinatorially homotopic. 
    We proceed by induction on the maximal length $m$ of a directed path between $x$ and $y$ in $X$. 
    Without loss of generality, we can suppose that $y=\sk(X)$, since if $y\neq\sk(X)$ we can always find a directed path between $y$ and $\sk(X)$.
    The cases when $m=0$ and $m=1$ are trivial. 
    Suppose that the hypothesis holds up to $m=k-1, k\geq 2$, and consider two paths $\gamma$ and $\gamma'$ for which $m=k$. 
    Let $e$ and $e'$ denote the edges of $\gamma$ and $\gamma'$ that are adjacent to $x$. 
    We examine three cases.
    \begin{enumerate}
        \item If $e=e'$, we can apply the induction hypothesis to $\gamma \setminus e$ and $\gamma' \setminus e'$. 
        \item If $e \neq e'$ and both edges are on the same $2$-face $F$ of $X$, then using the induction hypothesis we have that $\gamma$ and $\gamma'$ are respectively combinatorially homotopic to the paths $\delta$ and $\delta'$ defined as follows: they go from $x=\so(F)$ to $\sk(F)$ by the unique path containing $e$ and $e'$, respectively, and then from $\sk(F)$ to $y$ along the same arbitrary directed path. 
        Since $\delta$ and $\delta'$ are combinatorially homotopic by definition, the conclusion follows from the transitivity of the combinatorial homotopy equivalence relation. 
        \item Suppose that $e\neq e'$, and that $e$ and $e'$ are \emph{not} on the same $2$-face of $X$. 
        Since the outgoing link of $x$ is combinatorially connected, there exists a combinatorial path $\theta$ between the vertices corresponding to $e$ and $e'$ in this link (\cref{l:vertex-figure}). 
        For every edge $e_i$ of $X$ in the path $\theta$, choose a directed path $\gamma_i$ in $X$ from $x$ to $y=\sk(X)$ going through $e_i$. 
        Now apply Point (2) above to every pair of parallel directed paths $(\gamma_i, \gamma_{i+1})$ with $e_i$ and $e_{i+1}$ consecutive in $\theta$, and conclude again by transitivity of the combinatorial homotopy equivalence relation. 
    \end{enumerate}

    Finally, we prove that (ii) and (iii) imply (i).
    Suppose that every pair of parallel directed combinatorial paths are combinatorially homotopic. 
    We show that for any vertex $x$, its outgoing link is combinatorially connected. 
    Indeed, take two edges $e,e'$ of $X$ with source $x$, and consider their extensions to directed paths $\gamma, \gamma'$ from $x$ to $\sk(X)$. 
    By hypothesis, these two paths are combinatorially homotopic, that is, there is a sequence of parallel directed paths from $\gamma$ to $\gamma'$. 
    The collection of first edges in each of these paths defines a combinatorial path between $e$ and $e'$ in the outgoing link of $x$. 
    Thus, this link is combinatorially connected. 
\end{proof}

\begin{thm}
\label{p:second-proof}
    Let $X$ be a polyhedral complex with generic vector $\vec v$ such that the outgoing link of every vertex is combinatorially connected.
    Then, any two parallel combinatorial paths on $X$ are combinatorially homotopic.
\end{thm}

\begin{proof} 
    Assume that $X$ is connected, otherwise apply the argument to each connected component.
    By \cref{l:oriented}, the polyhedral complex $X$ admits a global sink $\sk(X)$ and the conclusion holds for \emph{directed} paths.  
    Let us show that this implies the undirected version.
    Let $\gamma$ be an undirected combinatorial path on $X$ between $x$ and $y$.
    For every vertex $z$ along $\gamma$, one can choose a directed path $\delta_z$ from $z$ to $\sk(X)$. 
    We observe that for any edge $e: z \to z'$ of $\gamma$, the directed paths $\delta_z$ and $\delta_{z'}e$ are combinatorially homotopic by hypothesis. 
    Going from $x$ to $y$ inductively one edge at a time and using transitivity of the homotopy equivalence relation, one obtains that $\gamma$ is combinatorially homotopic to $\delta_y^{-1}\delta_x$. 
    Taking another combinatorial path $\gamma'$ parallel to $\gamma$, the same argument shows that $\gamma'$ is combinatorial homotopic to $\delta_y^{-1}\delta_x$.
    Thus $\gamma$ and $\gamma'$ are combinatorially homotopic, which completes the proof. 
\end{proof}

As \cref{lemma:outgoing-link} shows, the class of polyhedral complexes to which \cref{p:second-proof} applies is a strict subclass of simply connected complexes.
This implies that the converse of \cref{p:second-proof} does not hold, and thus that Mac Lane's original proof is far from reaching the full generality of \cref{thm:top-coherence}.
However, it will be sufficient for our purposes, since --as we have seen in \cref{prop:polytopes}-- it applies to any polytope.

\smallskip
Another feature of generically oriented polyhedral complexes is that their $1$-skeleton defines abstract rewriting systems which are terminating and confluent, as we now show.


\subsection{Rewriting systems}
\label{ss:abstract-rewriting}

We refer to \cite{baaderTermRewritingAll1998} for more details on rewriting systems. 
\begin{definition}
    An \defn{abstract rewriting system} is a set $A$ together with a binary relation~$\to$. 
\end{definition}
We denote by $\xrightarrow{*}$ the reflexive and transitive closure of ${\to}$. 
We say that $(A,\to)$ is \defn{locally confluent} (resp.\ \defn{confluent}) if for all $a,a_1,a_2 \in A$ such that $a_1 \leftarrow a \to a_2$ (resp.\ $a_1 \xleftarrow{*} a \xrightarrow{*} a_2$), there exists a term $b$ with $a_1 \xrightarrow{*} b \xleftarrow{*} a_2$.  
The diagram
\begin{center}
  \begin{tikzcd}
    & a \arrow[ld] \arrow[rd] &                     \\
a_1 \arrow[rd, "*"'] &                         & a_2 \arrow[ld, "*"] \\
    & b                       &                    
\end{tikzcd}
\end{center}
is called a \defn{local confluence diagram}.
A rewriting system is \defn{terminating} if every reduction sequence $a \to a_1 \to a_2 \to \cdots$ eventually must terminate.
An element $a \in A$ is \defn{reducible} if there exists an $a' \in A$ such that $a \to a'$; otherwise it is called \defn{irreducible} -- the rewriting synonymous of local sink!
We say that $b$ is a \defn{normal form} of $a$ if $a \xrightarrow{*} b$ and $b$ is irreducible.

    Given a polyhedral complex $X$ and a generic vector $\vec v$, one can consider the abstract rewriting system defined by $\vec v$ on the vertices of $X$.
\begin{definition}
    The \defn{vertices rewriting system} is the pair $(X_0,\to)$ made of the set of vertices $X_0$ of $X$, together with the following relation: we have $x \to y$ if $x$ and $y$ are vertices of the same edge and $\langle v, x \rangle < \langle v, y \rangle$.
\end{definition}
According to this definition, we have $x \xrightarrow{*} y$ if and only if there is a directed path from $x$ to $y$ in~$X_1$. 
The hypothesis of \cref{p:second-proof} imposes that the rewriting system $(X_0,\to)$ is terminating and confluent.
\begin{proposition}
    \label{prop:terminating-confluent}
    Let $X$ be a polyhedral complex and $\vec v$ be a generic vector. 
    If the outgoing link of every vertex is combinatorially connected, the rewriting system $(X_0,\to)$ is terminating and confluent. 
\end{proposition}
\begin{proof}
    Since $\vec v$ is generic, and thus strictly increasing along edges, it defines a partial order, and since the set $X_0$ is finite, the rewriting system $(X_0,\to)$ is terminating.
    By \cref{l:oriented}, there is a global sink in each connected component of $X$.
    Confluence then follows: given any pair of vertices $x,y$ in the same connected component, since $\vec v$ is generic there are directed paths $x \xrightarrow{*} s \xleftarrow{*} y$ to the global sink $s$ of this connected component. 
\end{proof}
\begin{corollary}
    The abstract rewriting system on the vertices of any oriented polytope~$P$ is terminating and confluent.
    Moreover, every pair of vertices admits a unique normal form~$\sk(P)$.
\end{corollary}
Recall that a polytope $P$ is \emph{simple} if each vertex of $P$ is incident to precisely $\dim P$ edges.
\begin{lemma}
    \label{l:simple-local-diagrams}
    If a polytope $P$ is simple, then there is a bijection between the local confluence diagrams of $(P_0,\to)$ and the oriented boundaries of the $2$-faces of~$P$.
\end{lemma}
\begin{proof}
    When $P$ is simple, the vertex figure $P/x$ of every vertex $x$ is a simplex \cite[Prop.~2.16]{Ziegler95}, with each edge in $P/x$ corresponding to a $2$-face of $P$ (\cref{l:vertex-figure}). 
    Thus every pair of edges $e,e'$ with source $x=\so(e)=\so(e')$ determines a $2$-face of~$P$. 
\end{proof}
Not much more can be said at this level of generality. 
For the specific familiy of operahedra that we will consider in the next section, the rewriting systems possess more structure (they are \emph{term} rewriting system) and are studied in a companion paper \cite{CLA24}.


\section{Categorical coherence} 
\label{s:catoperads}

 
\subsection{Categorified non-symmetric operad}
\label{ss:def-catoperads}

Throughout this section we consider structures without units.
Unless otherwise stated, the adjective ``non-unital" will be implicitly assumed. 

\begin{definition} 
\label{def:catoperad}
A \defn{categorified non-symmetric operad} $\mathcal{P}$ is a collection $\left\{  \mathcal{P}(n)  \right\}_{n\in \mathbb{N}}$ of small categories equipped with bifunctors  
$$ \begin{array}{clll}
\circ_i&\colon& \mathcal{P}(n) \times
                    \mathcal{P}(k)
                    \longrightarrow \mathcal{P}(n+k-1) \ ,
                    & \text{for}\ 1 \leq i \leq n \ ,
\end{array}  $$
and for each $\kappa \in \mathcal{P}(m)$,  $\mu \in \mathcal{P}(n)$, $\nu \in \mathcal{P}(k)$, $1 \leq i \leq m$, $1 \leq j \leq n$ natural isomorphisms 
$$ \begin{array}{clll}
    \beta_{\kappa,\mu,\nu}&\colon& 
    (\kappa \circ_i \mu) \circ_{j+i-1} \nu  \overset{\cong}{\longrightarrow} \kappa \circ_i (\mu \circ_j \nu) \ , &  \\
    \theta_{\kappa,\nu,\mu}&\colon& 
    (\kappa \circ_i \nu) \circ_{j+k-1} \mu 
    \overset{\cong}{\longrightarrow} (\kappa \circ_j \mu) \circ_i \nu \ , & \text{when}\ i < j \ , 
\end{array}  $$
such that the following diagrams commute: the pentagonal \\
\begin{center}
\resizebox{\linewidth}{!}{
\begin{tikzpicture}[scale=2.5]
    \node (P1) at (0,1) {$((\kappa\circ\tau)\circ\mu)\circ\nu$};
    \node (P2) at (-0.866,0.5) {$(\kappa\circ(\tau\circ\mu)\circ\nu)$};
    \node (P3) at (-0.866,-0.5) {$\kappa\circ((\tau\circ\mu)\circ\nu)$};
    \node (P4) at (0,-1) {$\kappa\circ(\tau\circ(\mu\circ\nu))$};
    \node (P5) at (1,0) {$(\kappa\circ\tau)\circ(\mu\circ\nu)$} ;
    \draw[->] (P1)--(P2) node[midway,above left] {$\beta_{\kappa,\tau,\mu}\circ 1_\nu$};
    \draw[->] (P2)--(P3) node[midway,left] {$\beta_{\kappa,\tau\circ\mu,\nu}$};
    \draw[->] (P3)--(P4) node[midway,below left] {$1_\kappa \circ \beta_{\tau,\mu,\nu}$};
    \draw[->] (P1)--(P5) node[midway,above right] {$\beta_{\kappa\circ\tau,\mu,\nu}$};
    \draw[->] (P5)--(P4) node[midway,below right] {$\beta_{\kappa,\tau,\mu\circ\nu}$};
\end{tikzpicture} \quad 
\begin{tikzpicture}[scale=2.5]
    \node (P1) at (0,1) {$((\kappa\circ\tau)\circ\mu)\circ\nu$};
    \node (P2) at (-0.866,0.5) {$((\kappa\circ\tau)\circ\nu)\circ\mu$};
    \node (P3) at (-0.866,-0.5) {$((\kappa\circ\nu)\circ\tau)\circ\mu$};
    \node (P4) at (0,-1) {$(\kappa\circ\nu)\circ(\tau\circ\mu)$};
    \node (P5) at (1,0) {$(\kappa\circ(\tau\circ\mu)\circ\nu$} ;
    \draw[->] (P1)--(P2) node[midway,above left] {$\theta_{\kappa\circ\tau,\mu,\nu}$};
    \draw[->] (P2)--(P3) node[midway,left] {$\theta_{\kappa,\tau,\nu}\circ 1_\mu$};
    \draw[->] (P3)--(P4) node[midway,below left] {$\beta_{\kappa\circ\nu,\tau,\mu}$};
    \draw[->] (P1)--(P5) node[midway,above right] {$\beta_{\kappa,\tau,\mu}\circ 1_\nu$};
    \draw[->] (P5)--(P4) node[midway,below right] {$\theta_{\kappa,\tau\circ\mu,\nu}$};
\end{tikzpicture} } \\
\end{center}

\begin{center}
\resizebox{0.5\linewidth}{!}{
\begin{tikzpicture}[scale=2.5]
    \node (P1) at (0,1) {$((\kappa\circ\tau)\circ\mu)\circ\nu$};
    \node (P2) at (-0.866,0.5) {$((\kappa\circ\mu)\circ\tau)\circ\nu$};
    \node (P3) at (-0.866,-0.5) {$((\kappa\circ\mu)\circ\nu)\circ\tau$};
    \node (P4) at (0,-1) {$(\kappa\circ(\mu\circ\nu))\circ\tau$};
    \node (P5) at (1,0) {$(\kappa\circ\tau)\circ(\mu\circ\nu)$} ;
    \draw[->] (P1)--(P2) node[midway,above left] {$\theta_{\kappa,\tau,\mu}\circ 1_\nu$};
    \draw[->] (P2)--(P3) node[midway,left] {$\theta_{\kappa\circ\mu,\tau,\nu}$};
    \draw[->] (P3)--(P4) node[midway,below left] {$\beta_{\kappa,\mu,\nu}\circ 1_\tau$};
    \draw[->] (P1)--(P5) node[midway,above right] {$\beta_{\kappa\circ\tau,\mu,\nu}$};
    \draw[->] (P5)--(P4) node[midway,below right] {$\theta_{\kappa,\tau,\mu\circ\nu}$};
\end{tikzpicture}}
\end{center}

and hexagonal identities \\
\begin{center}
\resizebox{\linewidth}{!}{
\begin{tikzpicture}[scale=2.5]
    \node (P1) at (0,1) {$((\kappa\circ\tau)\circ\mu)\circ\nu$};
    \node (P2) at (-0.866,0.5) {$(\kappa\circ(\tau\circ\mu))\circ\nu$};
    \node (P3) at (-0.866,-0.5) {$\kappa\circ((\tau\circ\mu)\circ\nu)$};
    \node (P4) at (0,-1) {$\kappa\circ((\tau\circ\nu)\circ\mu)$};
    \node (P5) at (0.866,0.5) {$((\kappa\circ\tau)\circ\nu)\circ\mu$} ;
    \node (P6) at (0.866,-0.5) {$(\kappa\circ(\tau\circ\nu))\circ\mu$};
    \draw[->] (P1)--(P2) node[midway,above left] {$\beta_{\kappa,\tau,\mu}\circ 1_\nu$};
    \draw[->] (P2)--(P3) node[midway,left] {$\beta_{\kappa,\tau\circ\mu,\nu}$};
    \draw[->] (P3)--(P4) node[midway,below left] {$1_\kappa \circ \theta_{\tau,\mu,\nu}$};
    \draw[->] (P1)--(P5) node[midway,above right] {$\theta_{\kappa\circ\tau,\mu,\nu}$};
    \draw[->] (P5)--(P6) node[midway,right] {$\beta_{\kappa,\tau,\nu}\circ 1_\mu$};
    \draw[->] (P6)--(P4) node[midway,below right] {$\beta_{\kappa,\tau\circ\nu,\mu}$};
\end{tikzpicture} \quad \quad
\begin{tikzpicture}[scale=2.5]
    \node (P1) at (0,1) {$((\kappa\circ\tau)\circ\mu)\circ\nu$};
    \node (P2) at (-0.866,0.5) {$((\kappa\circ\mu)\circ\tau)\circ\nu$};
    \node (P3) at (-0.866,-0.5) {$((\kappa\circ\mu)\circ\nu)\circ\tau$};
    \node (P4) at (0,-1) {$((\kappa\circ\nu)\circ\mu)\circ\tau$};
    \node (P5) at (0.866,0.5) {$((\kappa\circ\tau)\circ\nu)\circ\mu$} ;
    \node (P6) at (0.866,-0.5) {$((\kappa\circ\nu)\circ\tau)\circ\mu$};
    \draw[->] (P1)--(P2) node[midway,above left] {$\theta_{\kappa,\tau,\mu}\circ 1_\nu$};
    \draw[->] (P2)--(P3) node[midway,left] {$\theta_{\kappa\circ\mu,\tau,\nu}$};
    \draw[->] (P3)--(P4) node[midway,below left] {$\theta_{\kappa,\mu,\nu}\circ 1_\tau$};
    \draw[->] (P1)--(P5) node[midway,above right] {$\theta_{\kappa\circ\tau,\mu,\nu}$};
    \draw[->] (P5)--(P6) node[midway,right] {$\theta_{\kappa,\tau,\nu}\circ 1_\mu$};
    \draw[->] (P6)--(P4) node[midway,below right] {$\theta_{\kappa\circ\nu,\tau,\mu}$};
\end{tikzpicture}  } \quad \ .
\end{center}
\end{definition}
The diagrams above hold for all instances of composable $\beta$ and $\theta$; these depend on the indices $i,j,k$, which are omitted for the sake of readability. 
Observe that a categorified non-symmetric operad concentrated in arity $1$ is a non-symmetric monoidal category.

As explained in \cref{prop:bijection-nestings} below, one can picture an object $\mu \in \mathcal{P}(n)$ as a planar tree with one vertex decorated by $\mu$, $n$ leaves and one root (a \defn{corolla}). 
The $\circ_i$ bifunctors then correspond to the operation of \defn{grafting} a corolla on top of another.
Iterated applications of the $\circ_i$ can be visualized as fully nested planar trees, with vertices decorated by objects of $\mathcal{P}$, see \cref{fig:treeandnesting}. 
A \defn{nesting} of a planar tree is a collection of subtrees (\defn{nests}) which are either included in one another or disjoint. 
A nesting is \defn{full} if its number of nests is maximal, equal to the number of internal edges of the tree \cite[Def.~2.2]{laplante-anfossiDiagonalOperahedra2022a}.

\begin{figure}[h!]
\centering
\resizebox{0.4\linewidth}{!}{
\begin{tikzpicture}[scale=1.2]
        \node (E)[circle,draw=black,minimum size=4mm,inner sep=0.1mm] at (-0,0) {\small $\kappa$};
        \node (F) [circle,draw=black,minimum size=4mm,inner sep=0.1mm] at (-1,1) {\small $\tau$};
        \node (A) [circle,draw=black,minimum size=4mm,inner sep=0.1mm] at (-1,2) {\small $\mu$};
        \node (q) [circle,draw=black,minimum size=4mm,inner sep=0.1mm] at (0,1) {\small $\nu$};
        \node (r) [circle,draw=black,minimum size=4mm,inner sep=0.1mm] at (1.65,1) {\small $\rho$};
        \node (x) [circle,draw=none,minimum size=4mm,inner sep=0.1mm] at (-0.4,0.62) {\color{Cyan} \small $1$};
        \node (y) [circle,draw=none,minimum size=4mm,inner sep=0.1mm] at (-0.875,1.6) {\color{Cerulean} \small $2$};
        \node (u) [circle,draw=none,minimum size=4mm,inner sep=0.1mm] at (0.15,0.65) {\color{NavyBlue} \small $3$};
        \node (v) [circle,draw=none,minimum size=4mm,inner sep=0.1mm] at (1.1,0.5) {\color{MidnightBlue} \small $4$};
        \draw[-] (0.8,0.8) -- (E)--(-1.65,0.8); 
        \draw[-] (-1.2,2.8) -- (A)--(-0.8,2.8); 
        \draw[-] (-0.2,1.8) -- (q)--(0.2,1.8);   
        \draw[-] (1.85,1.8) -- (r)--(1.45,1.8); 
        \draw[-] (E)--(0,-0.55); 
        \draw[-] (-1.4,1.8) -- (F)--(-0.6,1.8);   
        \draw[-] (E)--(F) node {};
        \draw[-] (E)--(q) node  {};
        \draw[-] (E)--(r) node {};
        \draw[-] (F)--(A) node {};
        \draw [Cyan,rounded corners,thick] (0.11,-0.32) -- (-0.14,-0.28) -- (-1.28,0.86) -- (-1.32,1.1) --  (-1.1,1.32) -- (-0.86,1.28) -- (0.28,0.14) -- (0.32,-0.11) -- cycle;
        \draw [Cerulean,rounded corners,thick] (0.14,-0.42) -- (-0.18,-0.36) -- (-1.2,0.6) -- (-1.4,0.9) -- (-1.3,2.1) -- (-1.15,2.3) -- (-0.85,2.3) -- (-0.7,2.1) --  (-0.7,1.3) -- (0.36,0.18) -- (0.42,-0.14) -- cycle;
        \draw [NavyBlue,rounded corners,thick] (0.18,-0.5) -- (-0.23,-0.45) -- (-1.3,0.6) -- (-1.5,0.9) --  (-1.35,2.3) --  (-1.15,2.4) --  (-0.85,2.4) --  (-0.7,2.3) --  (.25,1.2) -- (0.45,0.23) -- (0.5,-0.18) -- cycle;
        \draw [MidnightBlue,rounded corners,thick] (0.16,-0.6) -- (-0.25,-0.55) -- (-1.4,0.6) -- (-1.6,0.9) --  (-1.4,2.4) --  (-1.15,2.5) --  (-0.75,2.5) --  (1.8,1.2) --  (1.95,1) --  (1.8,0.8) -- cycle;
\end{tikzpicture} }
\caption{A fully nested planar tree.}
\label{fig:treeandnesting}
\end{figure}
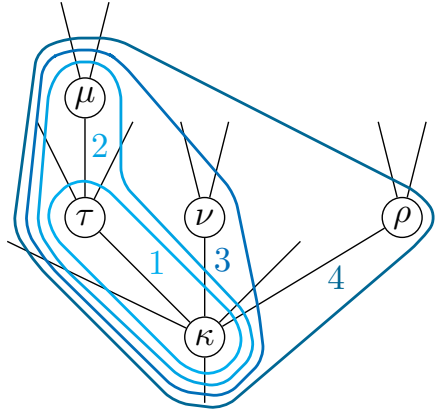 

The $\beta$ and $\theta$ arrows correspond to the sequential and parallel axioms of an operad, and relate the two possible ways of fully nesting a tree with $3$ vertices, see \cref{fig:betatheta}. 
Moreover, there is then one coherence diagram (pentagon or hexagon) for every planar tree with $4$ vertices, see \cref{fig:planar4trees}.

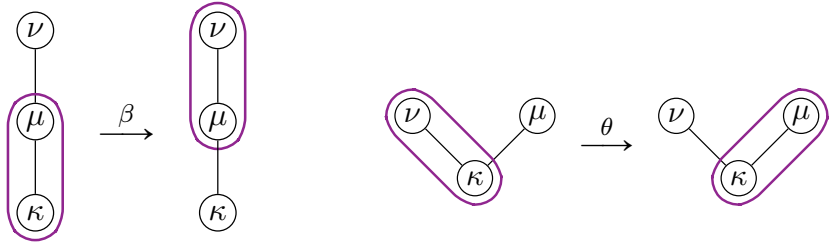
\begin{figure}[h!]
\begin{center}
\resizebox{0.25\linewidth}{!}{
\quad
\begin{tikzpicture}
    \node (t3)[circle,draw=black,minimum size=4mm,inner sep=0.1mm] at (0,0) {\small $\kappa$};
    \node (t2) [circle,draw=black,minimum size=4mm,inner sep=0.1mm] at (0,1) {\small $\mu$};
    \node (t1) [circle,draw=black,minimum size=4mm,inner sep=0.1mm] at (0,2) {\small $\nu$};  
    \draw[-] (t3)--(t2) node {};
    \draw[-] (t2)--(t1) node {};
    \draw [Plum,rounded corners,thick] (-0.15+2,0.7) -- (-0.3+2,0.9) -- (-0.3+2,2.1) -- (-0.15+2,2.3) -- (0.15+2,2.3) -- (0.3+2,2.1) -- (0.3+2,0.9) -- (0.15+2,0.7) -- cycle;

    \node (B) at (1,1) {$\overset{\beta}{\longrightarrow}$};

    \node (b3)[circle,draw=black,minimum size=4mm,inner sep=0.1mm] at (2,0) {\small $\kappa$};
    \node (b2) [circle,draw=black,minimum size=4mm,inner sep=0.1mm] at (2,1) {\small $\mu$};
    \node (b1) [circle,draw=black,minimum size=4mm,inner sep=0.1mm] at (2,2) {\small $\nu$};  
    \draw[-] (b3)--(b2) node {};
    \draw[-] (b2)--(b1) node {};
    \draw [Plum,rounded corners,thick] (1.85-2,-0.3) -- (1.7-2,-.1) -- (1.7-2,1.1) -- (1.85-2,1.3) -- (2.15-2,1.3) -- (2.3-2,1.1) -- (2.3-2,-.1) -- (2.15-2,-.3) -- cycle;
\end{tikzpicture}} \quad\quad \raisebox{2.75em}{}\quad\quad 
\resizebox{0.4\linewidth}{!}{
\raisebox{1em}{\begin{tikzpicture}
    \node (t3)[circle,draw=black,minimum size=4mm,inner sep=0.1mm] at (0,0) {\small $\kappa$};
    \node (t2) [circle,draw=black,minimum size=4mm,inner sep=0.1mm] at (0.71,0.71) {\small $\mu$};
    \node (t1) [circle,draw=black,minimum size=4mm,inner sep=0.1mm] at (-0.71,0.71) {\small $\nu$};  
    \draw[-] (t3)--(t1) node {};
    \draw[-] (t2)--(t3) node {};
    \draw [Plum,rounded corners,thick] (0.11,-0.32) -- (-0.14,-0.28) -- (-0.99,0.57) -- (-1.03,0.81) -- (-0.81,1.03) -- (-0.57,0.99)  -- (0.28,0.14) -- (0.32,-0.11) -- cycle;

    \node (B) at (1.5,0.5) {$\overset{\theta}{\longrightarrow}$};

    \node (b3)[circle,draw=black,minimum size=4mm,inner sep=0.1mm] at (3,0) {\small $\kappa$};
    \node (b2) [circle,draw=black,minimum size=4mm,inner sep=0.1mm] at (3+0.71,0.71) {\small $\mu$};
    \node (b1) [circle,draw=black,minimum size=4mm,inner sep=0.1mm] at (3-0.71,0.71) {\small $\nu$};  
    \draw[-] (b3)--(b1) node {};
    \draw[-] (b2)--(b3) node {};
    \draw [Plum,rounded corners,thick] (3-0.32,-0.11) -- (3-0.28,0.14) -- (3+0.57,0.99) -- (3+0.81,1.03) -- (3+1.03,0.81) -- (3+0.99,0.57) -- (3+0.14,-0.28) -- (3-0.11,-0.32) -- cycle;
\end{tikzpicture}}} \raisebox{2.75em}{} 
\end{center} 
\caption{The $\beta$ and $\theta$ isomorphisms defining a categorified non-symmetric operad.}
\label{fig:betatheta}
\end{figure}

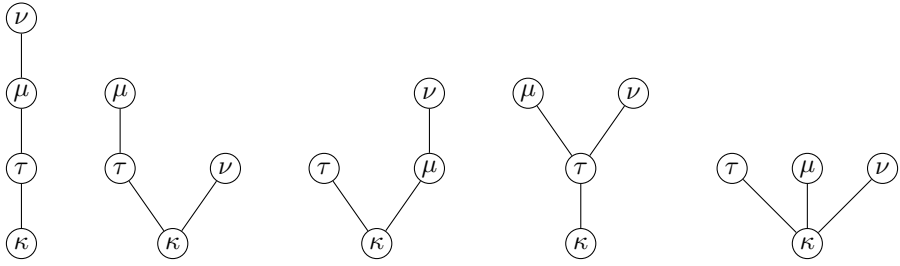
\begin{figure}[h!]
\centering 
\begin{tikzpicture}
    \node (t3)[circle,draw=black,minimum size=4mm,inner sep=0.1mm] at (0,0) {\small $\kappa$};
    \node (t2) [circle,draw=black,minimum size=4mm,inner sep=0.1mm] at (0,1) {\small $\tau$};
    \node (t1) [circle,draw=black,minimum size=4mm,inner sep=0.1mm] at (0,2) {\small $\mu$};  
    \node (t4) [circle,draw=black,minimum size=4mm,inner sep=0.1mm] at (0,3) {\small $\nu$};  
    \draw[-] (t4)--(t1) node {};
    \draw[-] (t3)--(t2) node {};
    \draw[-] (t2)--(t1) node {};
\end{tikzpicture}
\quad \quad
\begin{tikzpicture}
    \node (t3)[circle,draw=black,minimum size=4mm,inner sep=0.1mm] at (0,0) {\small $\kappa$};
    \node (t2) [circle,draw=black,minimum size=4mm,inner sep=0.1mm] at (-0.7,1) {\small $\tau$};
    \node (t1) [circle,draw=black,minimum size=4mm,inner sep=0.1mm] at (-0.7,2) {\small $\mu$};  
    \node (t4) [circle,draw=black,minimum size=4mm,inner sep=0.1mm] at (0.7,1) {\small $\nu$};  
    \draw[-] (t4)--(t3) node {};
    \draw[-] (t3)--(t2) node {};
    \draw[-] (t2)--(t1) node {};
\end{tikzpicture}
\quad \quad
\begin{tikzpicture}
    \node (t3)[circle,draw=black,minimum size=4mm,inner sep=0.1mm] at (0,0) {\small $\kappa$};
    \node (t2) [circle,draw=black,minimum size=4mm,inner sep=0.1mm] at (-0.7,1) {\small $\tau$};
    \node (t1) [circle,draw=black,minimum size=4mm,inner sep=0.1mm] at (0.7,1) {\small $\mu$};  
    \node (t4) [circle,draw=black,minimum size=4mm,inner sep=0.1mm] at (0.7,2) {\small $\nu$};  
    \draw[-] (t4)--(t1) node {};
    \draw[-] (t3)--(t1) node {};
    \draw[-] (t2)--(t3) node {};
\end{tikzpicture}
\quad \quad
\begin{tikzpicture}
    \node (t3)[circle,draw=black,minimum size=4mm,inner sep=0.1mm] at (0,0) {\small $\kappa$};
    \node (t2) [circle,draw=black,minimum size=4mm,inner sep=0.1mm] at (0,1) {\small $\tau$};
    \node (t1) [circle,draw=black,minimum size=4mm,inner sep=0.1mm] at (-0.7,2) {\small $\mu$};  
    \node (t4) [circle,draw=black,minimum size=4mm,inner sep=0.1mm] at (0.7,2) {\small $\nu$};  
    \draw[-] (t4)--(t2) node {};
    \draw[-] (t3)--(t2) node {};
    \draw[-] (t2)--(t1) node {};
\end{tikzpicture}
\quad \quad
\begin{tikzpicture}
    \node (t3)[circle,draw=black,minimum size=4mm,inner sep=0.1mm] at (0,0) {\small $\kappa$};
    \node (t2) [circle,draw=black,minimum size=4mm,inner sep=0.1mm] at (-1,1) {\small $\tau$};
    \node (t1) [circle,draw=black,minimum size=4mm,inner sep=0.1mm] at (0,1) {\small $\mu$};  
    \node (t4) [circle,draw=black,minimum size=4mm,inner sep=0.1mm] at (1,1) {\small $\nu$};  
    \draw[-] (t4)--(t3) node {};
    \draw[-] (t3)--(t2) node {};
    \draw[-] (t3)--(t1) node {};
\end{tikzpicture}
\caption{The five planar trees with four vertices, giving rise to the pentagonal (first three) and hexagonal (last two) identites.}
\label{fig:planar4trees}
\end{figure}

\begin{rem}
\label{rem:DPLA}
K. Do{\v s}en and Z. Petri{\'c} introduced in \cite[Sec.~12]{DP15} the notion of weak Cat-operad.
Despite looking different at first sight, the two notions are in fact equivalent.
The crucial observation is the following: the $\theta$-isomorphisms of Do{\v s}en--Petri{\'c} comprise both the isomorphisms~$\theta$ in \cref{def:catoperad} and their inverses $\theta^{-1}$.
Therefore, there are only two pentagonal coherence diagrams in the definition of a weak Cat-operad, the equations ($\beta$ $pent_e$) and ($\beta\theta 2_e$) of \cite[Section 9]{DP15}.
The set of diagrams of the form ($\beta$ $pent_e$) is the same as the set of diagrams which arises from the first pentagon in \cref{def:catoperad}, while the set of diagrams of the form ($\beta\theta 2_e$) is partitioned into the sets of diagrams which arise from the second and third pentagons in \cref{def:catoperad}.

We will give in \cref{thm:coherence-operahedra} two topological proofs of coherence for categorified non-symmetric operads.
A benefit of our presentation is that, adopting the oriented approach (see the second proof of \cref{thm:coherence-operahedra}), we get a proof of coherence where $\beta$ and $\theta$ are both treated as rewriting rules, in contrast with the proof in \cite{DP15}, which proceeds in two stages, much like in Mac Lane's proof of coherence for symmetric monoidal categories (see~\cref{MacLane-Kapranov-Simple}): first get rid of $\beta$ (rewriting), then deal with $\theta$. 
\end{rem}

\begin{definition}
    A \defn{strong morphism} of categorified non-symmetric operads $F: \mathcal{P} \to \mathcal{Q}$ is a collection of functors $F_n : \mathcal{P}(n) \to \mathcal{Q}(n)$ together with natural isomorphisms $$ \gamma_{\kappa,\mu}: F_{m-1+n}(\kappa \circ_i \mu) \overset{\cong}{\longrightarrow} F_m(\kappa) \circ_i F_n(\mu) $$ such that the following diagrams commute:
    \begin{center}
    \resizebox{\linewidth}{!}{
    \begin{tikzpicture}[scale=3]
        \node (P1) at (0,1) {$F((\kappa\circ\mu)\circ\nu)$};
        \node (P2) at (-0.866,0.5) {$F(\kappa\circ\mu)\circ F(\nu)$};
        \node (P3) at (-0.866,-0.5) {$(F(\kappa)\circ F(\mu))\circ F(\nu)$};
        \node (P4) at (0,-1) {$F(\kappa)\circ(F(\mu)\circ F(\nu))$};
        \node (P5) at (0.866,0.5) {$F(\kappa\circ(\mu\circ\nu))$} ;
        \node (P6) at (0.866,-0.5) {$F(\kappa)\circ F(\mu\circ\nu)$};
        \draw[->] (P1)--(P2) node[midway,above left] {$\gamma_{\kappa\circ\mu,\nu}$};
        \draw[->] (P2)--(P3) node[midway,left] {$\gamma_{\kappa\circ\mu}\circ 1_{F(\nu)}$};
        \draw[->] (P3)--(P4) node[midway,below left] {$\beta_{F(\kappa),F(\mu),F(\nu)}$};
        \draw[->] (P1)--(P5) node[midway,above right] {$F(\beta_{\kappa,\mu,\nu})$};
        \draw[->] (P5)--(P6) node[midway,right] {$\gamma_{\kappa,\mu\circ\nu}$};
        \draw[->] (P6)--(P4) node[midway,below right] {$1_{F(\kappa)}\circ\gamma_{\mu,\nu}$};
    \end{tikzpicture}  \quad \quad
    \begin{tikzpicture}[scale=3]
        \node (P1) at (0,1) {$F((\kappa\circ\nu)\circ\mu)$};
        \node (P2) at (-0.866,0.5) {$F(\kappa\circ\nu)\circ F(\mu)$};
        \node (P3) at (-0.866,-0.5) {$(F(\kappa)\circ F(\nu))\circ F(\mu)$};
        \node (P4) at (0,-1) {$(F(\kappa)\circ F(\mu))\circ F(\nu)$};
        \node (P5) at (0.866,0.5) {$F((\kappa\circ\mu)\circ\nu)$} ;
        \node (P6) at (0.866,-0.5) {$F(\kappa\circ\mu)\circ F(\nu)$};
        \draw[->] (P1)--(P2) node[midway,above left] {$\gamma_{\kappa\circ\nu,\mu}$};
        \draw[->] (P2)--(P3) node[midway,left] {$\gamma_{\kappa,\nu}\circ 1_{F(\mu)}$};
        \draw[->] (P3)--(P4) node[midway,below left] {$\theta_{F(\kappa),F(\nu),F(\mu)}$};
        \draw[->] (P1)--(P5) node[midway,above right] {$F(\theta_{\kappa,\nu,\mu})$};
        \draw[->] (P5)--(P6) node[midway,right] {$\gamma_{\kappa\circ\mu,\nu}$};
        \draw[->] (P6)--(P4) node[midway,below right] {$\gamma_{\kappa,\nu}\circ 1_{F(\nu)}$};
    \end{tikzpicture}  } \quad \ .
\end{center}
    It is said to be \defn{strict} if the natural isomorphisms are identities. 
\end{definition}

Once again, the diagrams above hold for all instances of $\beta$ and $\theta$ arrows, and we have omitted the $(i,j,k)$-indices for readability. 
Observe that a strong (resp. strict) morphism between categorified non-symmetric operads concentrated in arity $1$ is a strong (resp. strict) monoidal functor between non-symmetric monoidal categories. 


\subsection{Coherence for categorified non-symmetric operads}
\label{ss:coherence-catoperads}

We now aim at the coherence theorem for categorified non-symmetric operads.
In order to state the theorem, we construct the free non-symmetric categorified operad on a family of sets $S=\{S_n\}_{n \geq 1}$.
We define a family of categories~$\mathcal{S}=\{\mathcal{S}_n\}_{n \geq 1}$ whose objects are given by the following rules:
\begin{enumerate}
    \item if $a \in S_n$, then $a$ is an object of $\mathcal{S}_n$;
    \item if $t_1 \in \mathcal{S}_m$ and $t_2 \in \mathcal{S}_n$, then $t_1 \circ_i t_2$ is an object of $\mathcal{S}_{m-1+n}$, for any $1 \leq i \leq m$.
\end{enumerate}
If an object $t_1$ is in $\mathcal{S}_n$, we say that $t_1$ has \defn{arity} $n$. 
Now we define a set $M$ of \defn{basic morphisms} $\beta: (t_1 \circ_i t_2) \circ_{j+i-1} t_3 \leftrightarrow t_1 \circ_i (t_2 \circ_j t_3) : \beta^{-1}$ for every $t_1 \in \mathcal{S}_m, t_2 \in \mathcal{S}_n, t_3 \in \mathcal{S}_k$, $1 \leq i \leq m$ and $1 \leq j \leq n$, and $\theta: (t_1 \circ_i t_3) \circ_{j-1+k} t_2 \leftrightarrow (t_1 \circ_j t_2) \circ_i t_3 : \theta^{-1}$ whenever $i<j$.
We then define the \defn{generating morphisms} of the family $\mathcal{S}$ by the following rules:
\begin{enumerate}
    \item if $\phi \in M$, then $\phi$ is a generating morphism of $\mathcal{S}$; 
    \item if $\phi : t_1 \to t_2$ is a generating morphism in $\mathcal{S}$, and $t_3 \in \mathcal{S}$, then $\phi \circ_i \id : t_1 \circ_i t_3 \to t_2 \circ_i t_3$ and $\id \circ_j \phi : t_3 \circ_j t_1 \to t_3 \circ_j t_2$ are generating morphisms, for any $i$ (resp. $j$) between~$1$ and the arity of $t_1$ (resp. $t_3$).
\end{enumerate}
Note that by construction, for every morphism $\phi : t_1 \to t_2$, the objects $t_1$ and $t_2$ have the same arity, and we say that $\phi$ has this \defn{arity}. 
We then define $\mathcal{S}_n$ as the free category over all generating morphisms of arity $n$. 
This finishes the construction of our family $\mathcal{S}$ of categories.

\begin{definition}
    We denote by $\mathcal{F}(S)$ the quotient of the family of categories $\mathcal{S}$ by localization (inverting the $\beta$ and $\theta$ morphisms), the axioms of bifunctors for the $\circ_i$, the naturality conditions for $\beta$ and $\theta$, and the coherence diagrams (pentagons and hexagons) defining a categorified non-symmetric operad. 
\end{definition}

We obtain that $\mathcal{F}(S)$ is the free categorified non-symmetric operad on $S$. 
That is, for any categorified non-symmetric operad $\mathcal{P}$, and for any family of functions $\rho_n : S_n \to \obj(\mathcal{P}(n))$, there is a unique \emph{strict} morphism of non-symmetric categorified operads $\mathcal{F}(S) \to \mathcal{P}$ which extends $\rho=\{\rho_n\}_{n\geq 1}$. 
By precomposing it with the quotient map $\mathcal{S} \to \mathcal{F}(S)$, we get a levelwise functor $\extu{-}:\mathcal{S} \to \mathcal{P}$.

\begin{thm}[Coherence theorem]
\label{thm:coherence-operahedra}
    For any categorified non-symmetric operad $\mathcal{P}$, for any family of functions $\rho : S \to \obj(\mathcal{P})$, and for any two parallel morphisms $\phi_1,\phi_2: t_1 \to t_2$ in~$\mathcal{S}$, we have $\extu{\phi_1}=\extu{\phi_2}$.
\end{thm}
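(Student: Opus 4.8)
The plan is to deduce the theorem from the general topological results of this section, applied to the \emph{operahedra}. First I would reduce to a single tree. Every generating morphism of $\mathcal{S}$ — the moves $\beta$, $\theta$ and their variants $\phi\circ_i\id$ and $\id\circ_j\phi$ — merely rearranges the nests of a fully nested decorated planar tree and leaves the underlying decorated tree (obtained by forgetting all nests) unchanged. Hence any two parallel morphisms $\phi_1,\phi_2\colon t_1\to t_2$ in $\mathcal{S}$ share a common underlying decorated planar tree $T$, and the whole argument can be carried out ``inside $T$''.

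The second step is to bring in the operahedron $P_T$ associated with $T$, a polytope whose face poset is the poset of nestings of $T$ \cite{DP15,laplante-anfossiDiagonalOperahedra2022a}. Unravelling this correspondence yields exactly the dictionary we need: the vertices of $P_T$ are the maximal nestings of $T$, i.e. the objects of $\mathcal{S}$ with underlying tree $T$; the edges of $P_T$ are the elementary $\beta$- and $\theta$-moves between them (including the deep ones obtained from the $\circ_i\id$ rule), i.e. the generating morphisms; and, since every face of $P_T$ is a product of operahedra of smaller trees, each $2$-face of $P_T$ is either a pentagon or a hexagon — the $2$-dimensional operahedron of one of the five $4$-vertex trees of \cref{fig:planar4trees}, realizing a coherence diagram of \cref{def:catoperad} — or a square, the product of two interval operahedra, realizing an interchange relation that holds by bifunctoriality of the $\circ_i$.

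With this translation, $\phi_1$ and $\phi_2$ are parallel combinatorial paths on $P_T$. Being a polytope, $P_T$ is convex, hence simply connected, so \cref{thm:top-coherence} applies: $\phi_1$ and $\phi_2$ are combinatorially homotopic, i.e. joined by a finite chain of replacements, each governed by a $2$-cell of $P_T$ or by a trivial backtracking $\alpha\alpha^{-1}=\id$. Every such replacement becomes an equality in $\mathcal{F}(S)$, since $\mathcal{F}(S)$ is obtained from $\mathcal{S}$ precisely by inverting $\beta$ and $\theta$ and imposing the bifunctor axioms and the pentagonal and hexagonal coherence diagrams; as $[-]\colon\mathcal{S}\to\mathcal{P}$ factors through the quotient $\mathcal{S}\to\mathcal{F}(S)$, we conclude $[\phi_1]=[\phi_2]$. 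For the second, Morse-theoretic proof one instead fixes a vector $\vec v$ generic on the edges of $P_T$ (chosen, if desired, so that its edge-orientation is the $\beta/\theta$ rewriting direction), applies \cref{prop:polytopes} to see that $P_T$ has a unique sink with every outgoing link connected, and invokes \cref{p:second-proof}; this is the proof in which $\beta$ and $\theta$ appear on equal footing as terminating, confluent rewriting rules, as announced in \cref{rem:DPLA}.

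The main obstacle is the second step: making the dictionary between decorated nested planar trees and faces of $P_T$ fully precise, and in particular verifying that the $2$-faces of $P_T$ are \emph{exactly} the pentagons, hexagons and interchange squares — nothing more and nothing less — and that a morphism of $\mathcal{S}$, which may apply a move arbitrarily deep inside a subtree, really does correspond to a walk in the $1$-skeleton of the single polytope $P_T$. This is essentially the content of the theory of operahedra and can be cited; once it is in place, \cref{thm:top-coherence} (respectively \cref{p:second-proof}) produces the coherence theorem in a single step, as in the epigraph.
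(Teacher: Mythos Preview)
Your proposal is correct and follows essentially the same approach as the paper: you reduce to a single underlying planar tree, invoke the operahedron $P_T$ with its vertex/edge/$2$-face dictionary (maximal nestings / generating morphisms / pentagons, hexagons, and bifunctoriality squares), and then apply \cref{thm:top-coherence} for the first proof and \cref{prop:polytopes} together with \cref{p:second-proof} for the second. The paper's version is terser and, for the second proof, specifies the generic vector concretely (strictly decreasing coordinates in the Loday realization), but the logical structure is identical.
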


In order to prove this \cref{thm:coherence-operahedra}, we need to first recall the construction of the \defn{operahedra}, a family of polytopes whose faces are in bijection with the set of all nestings of a planar tree.
We refer to \cite[Sec.~2]{laplante-anfossiDiagonalOperahedra2022a} for details, see also \cite[Sec.~13]{DP15} and \cite{curienSyntacticAspectsHypergraph2019a}.
Given a planar tree $t$ with $n$ internal edges, and a full nesting $\mathcal{N}$ of $t$, one associates a point~$M(t,\mathcal{N}) \in \R^n$ via a simple algorithm which is due to J.-L. Loday \cite[Sec.~2.2]{laplante-anfossiDiagonalOperahedra2022a}.  
The \defn{operahedron}~$P_t \subset \R^n$ is the convex hull of the points $M(t,\mathcal{N})$, for all maximal nestings $\mathcal{N}$ of $t$.
It has dimension $n-1$. 
One then shows that the poset of nestings of~$t$, ordered by reverse inclusion, is isomorphic to the poset of faces of~$P_t$ \cite[Prop.~2.15]{laplante-anfossiDiagonalOperahedra2022a}. 
The dimension of a face is given by $n$ minus the number of nests in the corresponding nesting of $t$.

Reading a planar tree $t$ from the leaves to the root defines a family of \defn{incoming edges} and one \defn{outgoing edge} at each vertex of $t$.
Given the family of sets $S$ and a planar tree $t$, we say that a decoration of the vertices of $t$ by elements of $S$ is \defn{admissible} if at every vertex the number of incoming edges is equal to the arity of the element of $S$ decorating it. 
Now, let us consider the collection $\mathcal{O}(S)$ of polytopes with one copy of the operahedron~$P_t$ for each admissible decoration of the planar tree $t$ by elements of $S$. 
\begin{samepage}
    \begin{proposition}
        \label{prop:bijection-nestings}
            There are bijections between
            \begin{enumerate}
                \item objects of $\calS$ and vertices of the operahedra in $\mathcal{O}(S)$,
                \item generating morphisms of $\calS$ and edges of the operahedra in $\mathcal{O}(S)$,
                \item bifunctoriality, naturality and coherence diagrams and $2$-faces of the operahedra in~$\mathcal{O}(S)$.
            \end{enumerate}
        \end{proposition}
\end{samepage}
\begin{proof}
To each element $a$ of $S_n$, we associate a planar corolla with $n$ leaves and vertex decorated by $a$. 
Then, we identify $a \circ_i b$ with the planar tree obtained from grafting the corolla decorated by $b$ at the $i$th leaf of the corolla decorated by $a$.
Continuing in this fashion, and remembering the order in which we graft the corollas, we obtain all possible fully nested planar trees with vertices decorated by elements of $S$ (\cref{fig:treeandnesting}).
A generating morphism $f$ in $\mathcal{S}$ is an application of one of the associativity rules~$\beta$ or~$\theta$ to a fully nested tree $t$, moving only one nest (\cref{fig:betatheta}). 
If $t$ has $n$ internal edges, forgetting the nest that has been moved gives a nesting of $t$ with $n-1$ nests.
We associate to $f$ the edge of the operahedron $P_t$ in $\mathcal{O}(S)$ labeled by this nesting, see \cite[Def.~2.8 \& Prop.~3.11]{laplante-anfossiDiagonalOperahedra2022a}.
It remains to consider all the possible diagrams one can obtain by applying two generating morphisms to a given fully nested tree $t$ with $n$ internal edges. 
These arise from moving two different nests in the same fully nested tree. 
Starting by moving one or the other of these $2$ nests, one faces two types of situations:
\begin{itemize}
    \item[(A1)] If the two nests are disjoint, one obtains a bifunctoriality square,
    \item[(A2)] If the two nests are nested, but do not share the same root, one obtains a naturality square,
    \item[(B)] If the two nests are nested and share the same root, one obtains either a pentagon or a hexagon as in \cref{def:catoperad}.
\end{itemize}
To such a diagram, we associate the $2$-face of the operahedron~$P_t$ in $\mathcal{O}(S)$ corresponding to the nesting of $t$ obtained by forgetting the two nests that have been moved along the edges.
We refer to \cite[Sec.~2]{CLA24} for a more detailed analysis of the $2$-faces.  
\end{proof}
\begin{rem}
    The fact that every possible choice of initial moves gives rise to a $2$-face amounts to the fact that the operahedron $P_t$ is a \emph{simple} polytope \cite[Sec.~9]{DP-HP}.
    As \cref{l:simple-local-diagrams} shows, this property garantees the correspondence between geometric and rewriting-theoretic proofs of coherence, see \cite{CLA24} for more details on the latter.
\end{rem}
The conceptual origin of the bijections of \cref{prop:bijection-nestings} is the fact that the combinatorics of the faces of the operahedra correspond exactly to the monad of trees \cite[Sec.~5.6.1]{LodayVallette12}.
Or, said differently, it lies in the fact that the operahedra encode (via the cellular chains functor) the minimal resolution of the colored symmetric operad whose algebras are non-unital non-symmetric operads, see \cite{VanDerLaan03} and \cite[Sec.~4.1]{laplante-anfossiDiagonalOperahedra2022a}.

We are now ready to prove \cref{thm:coherence-operahedra}, using either our non-oriented or oriented topological coherence results for polytopes.
\begin{proof}[Proof of {\cref{thm:coherence-operahedra}}]
From Point (2) in \cref{prop:bijection-nestings}, we have that the morphisms of~$\mathcal{S}$ are in bijection with combinatorial paths on the operahedra of $\mathcal{O}(S)$.
Two parallel morphisms in~$\mathcal{S}$ thus define two parallel combinatorial paths on some operahedron $P_t$ in $\mathcal{O}(S)$. 
Since an operahedron $P_t$ is simply connected, \cref{thm:top-coherence} implies that these two combinatorial paths are combinatorially homotopic. 
By Point (3) in \cref{prop:bijection-nestings} the $2$-faces of the operahedra are exactly either a bifunctoriality or naturality square, a pentagon or a hexagon (witnessing a coherence condition) as in \cref{def:catoperad}.
Therefore, two parallel morphisms $\phi_1,\phi_2$ in $\mathcal{S}$ are equal in the quotient $\mathcal{F}(S)$, and thus their images $\extu{\phi_1},\extu{\phi_2}$ are also equal in $\mathcal{P}$.
\end{proof}

\begin{proof}[Second proof of {\cref{thm:coherence-operahedra}}]
    Alternatively, since the operahedra are polytopes, one can use \cref{prop:polytopes} and \cref{p:second-proof}. 
    As shown in \cite[Prop.~3.11]{laplante-anfossiDiagonalOperahedra2022a}, choosing a generic vector $\vec v$ which has strictly decreasing coordinates gives the orientations of the diagrams given in \cref{def:catoperad} on the $2$-faces.
    One then obtains a topological proof of coherence which follows closely the original proof of Mac Lane \cite[Thm.~3.1]{MacLane63}, suitably generalized to categorified operads. 
\end{proof}
   
Following \cref{rem:DPLA}, we have that \cref{thm:coherence-operahedra} gives an alternative, more economical proof of coherence for weak Cat-operads \cite[Prop.~14.2]{DP15}.
Incidentally, it gives an alternative input to the proof of coherence for cyclic symmetric categorified operads \cite{curienCategorifiedCyclicOperads2020}.

Restricting the theorem above to non-symmetric operads concentrated in arity $1$, the category $\mathcal{F}(S)$ becomes the free non-symmetric monoidal category on $S$, and we get the following corollary. 

\begin{corollary}[Mac Lane coherence theorem for non-symmetric monoidal categories]
\label{cor:MacLane}
    For any non-symmetric monoidal category $\mathcal{C}$, for any function $\rho : S \to \obj(\mathcal{C})$, and for any two parallel morphisms $\phi_1,\phi_2: t_1 \to t_2$ in $\mathcal{S}$, we have $\extu{\phi_1}=\extu{\phi_2}$.
\end{corollary}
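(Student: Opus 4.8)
The plan is to specialize \cref{thm:coherence-operahedra} to the arity-$1$ case. The first step is to recall, as already noted after \cref{def:catoperad}, that a (non-unital) non-symmetric monoidal category $(\mathcal{C},\otimes,\alpha)$ is precisely a categorified non-symmetric operad $\mathcal{P}$ with $\mathcal{P}(1)=\mathcal{C}$ and $\mathcal{P}(n)$ empty for $n\neq 1$: the unique partial composition $\circ_1\colon\mathcal{P}(1)\times\mathcal{P}(1)\to\mathcal{P}(1)$ is the tensor product, and the isomorphism $\beta_{\kappa,\mu,\nu}$ is the associator $\alpha_{\kappa,\mu,\nu}$. The isomorphisms $\theta$ never occur, since $\theta_{\kappa,\nu,\mu}$ is defined only for $1\le i<j\le m$ with $\kappa\in\mathcal{P}(m)$, which forces $m\ge 2$; consequently the second and third pentagons and both hexagons of \cref{def:catoperad} are vacuous, and only the first pentagon survives — this is exactly MacLane's pentagon.

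The second step is to identify the free objects. Restricting the construction of $\mathcal{F}(S)$ to a family $S$ concentrated in arity $1$ (so $S=S_1$), the objects of $\mathcal{S}$ are the fully parenthesized nonempty words in the alphabet $S$ — equivalently, the fully nested linear trees with vertices decorated by $S$ — and the generating morphisms are the instances of $\beta$, i.e.\ the elementary reassociations obtained by applying $(xy)z\mapsto x(yz)$ to a subword. Thus $\mathcal{F}(S)$ is the free non-unital non-symmetric monoidal category on the set $S$, and for a non-symmetric monoidal category $\mathcal{C}$ with $\rho\colon S\to\obj(\mathcal{C})$ the functor $[-]\colon\mathcal{S}\to\mathcal{C}$ is the evident interpretation of parenthesized words and reassociations. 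The assertion to be proved is then literally the arity-$1$ instance of \cref{thm:coherence-operahedra}, so invoking that theorem completes the proof.

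For a self-contained reading one can rerun the argument directly: the morphisms of $\mathcal{S}$ between two parenthesizations of a word form the combinatorial paths on the operahedron of the corresponding linear tree, which is the Stasheff associahedron, whose vertices are the parenthesizations and whose $2$-faces are squares (bifunctoriality of $\otimes$) and pentagons (the pentagon axiom). Since this polytope is simply connected, \cref{thm:top-coherence} shows that any two parallel such paths are combinatorially homotopic, whence $[\phi_1]=[\phi_2]$. Alternatively one feeds the associahedron to \cref{prop:polytopes} and \cref{p:second-proof} with a generic vector orienting every pentagonal $2$-face as in \cref{def:catoperad}; the resulting terminating confluent rewriting system on its vertices is the Tamari lattice, and the argument becomes, word for word, MacLane's original proof of \cite[Theorem 3.1]{MacLane63}.

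The only delicate point — and the closest thing to an obstacle — is the bookkeeping of this specialization: verifying that the index constraints genuinely suppress $\theta$ and all coherence diagrams except the first pentagon, that ``nested linear trees with $\beta$-moves'' is the same combinatorial object as ``parenthesized words with reassociations'', and that the operahedron of a linear tree is the associahedron with precisely the claimed $2$-faces. None of this is deep, but it is exactly what converts the general theorem into the classical statement. Consistently with the standing convention of this section, one obtains the coherence theorem only for non-unital monoidal categories.
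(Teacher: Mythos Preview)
Your proposal is correct and matches the paper's approach exactly: the corollary is obtained by specializing \cref{thm:coherence-operahedra} to the arity-$1$ case, and the paper says no more than this. Your additional bookkeeping (suppression of $\theta$ and of all diagrams but the first pentagon, identification of operahedra for linear trees with associahedra and of the induced order with the Tamari lattice) is accurate and simply makes explicit what the paper leaves implicit.
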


\begin{rem}
    As mentioned at the end of \cref{ss:abstract-rewriting}, the rewriting systems obtained on the vertices of the operahedra by choosing a generic vector with strictly decreasing coordinates are in fact \emph{term} rewriting systems.
    The faces of type (B) in Point (3) of \cref{prop:bijection-nestings} (the coherence conditions) correspond precisely to the \emph{critical pairs} of these rewriting systems, see \cite[Sec.~3.4]{CLA24}.
    Moreover, the associated posets on fully nested planar trees have recently been shown to be lattices \cite{DefanSack24}.
\end{rem}



\subsection{Symmetric monoidal categories} 

We now formulate and prove Mac Lane's coherence theorem for \emph{symmetric} monoidal categories in the same style as above.  Recall that in a symmetric monoidal category $\mathcal{C}$, in addition to the natural isomorphisms $\beta$, with components $\beta_{\kappa,\mu,\nu}:(\kappa\otimes\mu)\otimes\nu\to
\kappa\otimes(\mu\otimes\nu)$, there are involutive natural transformations $\tau$, with components $\tau_{\mu,\nu}:\mu\otimes\nu\to\nu\otimes\mu$.
Here, we use $\kappa,\mu,\nu,\ldots$ to range over the objects of the category, consistently with the notation used in~\cref{ss:def-catoperads,ss:coherence-catoperads}.
In addition to the pentagons, obtained from the first pentagon in~\cref{def:catoperad} by replacing $\circ$ with $\otimes$, there are hexagons 
\begin{center}
\resizebox{0.4\linewidth}{!}{
    \begin{tikzpicture}[scale=2.5]
    \node (P1) at (0,1) {$(\kappa\otimes \mu) \otimes \nu$};
    \node (P2) at (-0.866,0.5) {$\kappa\otimes (\mu \otimes \nu)$};
    \node (P3) at (-0.866,-0.5) {$(\mu \otimes \nu) \otimes \kappa$};
    \node (P4) at (0,-1) {$\mu \otimes (\nu \otimes \kappa)$};
    \node (P5) at (0.866,0.5) {$(\mu \otimes \kappa) \otimes \nu$} ;
    \node (P6) at (0.866,-0.5) {$\mu \otimes (\kappa \otimes \nu)$};
    \draw[->] (P1)--(P2) node[midway,above left] {$\beta$};
    \draw[->] (P2)--(P3) node[midway,left] {$\tau$};
    \draw[->] (P3)--(P4) node[midway,below left] {$\beta$};
    \draw[->] (P1)--(P5) node[midway,above right] {$\tau\otimes 1$};
    \draw[->] (P5)--(P6) node[midway,right] {$\beta$};
    \draw[->] (P6)--(P4) node[midway,below right] {$1\otimes\tau$};
\end{tikzpicture}}
\end{center}
for all objects $\kappa,\mu,\nu$ in $\mathcal{C}$.

In order to state the coherence theorem, we construct a free category on a set $S$ of \defn{generating objects}. 
We define a small category $\mathcal{S}^{\mathrm{ML}}$ whose set of objects $${\cal T}_S=\bigcup\setc{{\cal T}_A}{A \;\mbox{is a non-empty finite subset of}\;S}$$ is defined as follows:
\begin{enumerate}
    \item if $a \in S$, then $a\in{\cal T}_{\set{a}}$;
    \item if $t_1 \in {\cal T}_A$ and $t_2 \in {\cal T}_B$, and if $A\cap B=\emptyset$,  then $t_1 \otimes t_2\in{\cal T}_{A\cup B}$.
\end{enumerate}
We can see the objects of $\mathcal{S}^{\mathrm{ML}}$ as fully parenthesized words over $S$ where letters are not repeated.
We then define a set $M^{\mathrm{ML}}$ of \defn{basic morphisms} $\beta: (t_1 \otimes t_2) \otimes t_3 \leftrightarrow t_1 \otimes (t_2\otimes t_3) : \beta^{-1}$  and $\tau: t_1\otimes t_2 \leftrightarrow t_2 \otimes t_1$,  for every $t_1,t_2,t_3  \in {\cal T}_S$.
We then define the \defn{generating morphisms} of $\mathcal{S}^{\mathrm{ML}}$ by the following rules:
\begin{enumerate}
    \item if $\phi \in M^{\mathrm{ML}}$, then $\phi$ is a generating morphism; 
    \item if $\phi : t_1 \to t_2$ is a generating morphism and $t_3 \in {\cal T}_S$, then $\phi \otimes \id : t_1 \otimes t_3 \to t_2 \otimes t_3$ and $\id \otimes \phi : t_3 \otimes t_1 \to t_3 \otimes t_2$ are generating morphisms.
\end{enumerate}
We then define $\mathcal{S}^{\mathrm{ML}}$ as the free category over all generating morphisms. 
This finishes the construction of the category $\mathcal{S}^{\mathrm{ML}}$.
\begin{definition}
    We denote by $\mathcal{F}(S)$ the quotient of $\mathcal{S}^{\mathrm{ML}}$ by localization (inverting the $\beta$ morphisms), by the axioms $\tau_{t_1,t_2}\circ \tau_{t_2,t_1}=1$, by the axioms of bifunctors, by the naturality conditions for $\beta$ and $\tau$, and by the coherence conditions of symmetric monoidal categories.
\end{definition}
By freeness, we have that for any symmetric monoidal category $\mathcal{C}$, and for any function $\rho : S \to \obj(\mathcal{C})$, there is a unique functor  $\extd{-}{\mathrm{ML}}:\mathcal{S}^{\mathrm{ML}} \to \mathcal{C}$ which extends $\rho$ and sends the formal basic morphisms to the actual canonical morphisms of $\mathcal{C}$.
This functor factorizes  through the quotient map $[-]^{\mathrm{ML}} :\mathcal{S}^{\mathrm{ML}} \to \mathcal{F}(S)$.

It turns out that Kapranov's topological proof is not based on the above presentation of~$\mathcal{F}(S)$, but on another presentation of this category, that is made explicit in~\cite[Sec.~2]{baralicSimplePermutoassociahedron2019}. 
Let us recall this presentation. 
We define another category $\mathcal{S}^{\mathrm{K}}$ as follows. Its objects are the same as those of $\mathcal{S}^{\mathrm{ML}}$. We define a set $M^{\mathrm{K}}$ of \defn{basic morphisms} $\beta: (t_1 \otimes t_2) \otimes t_3 \leftrightarrow t_1 \otimes (t_2\otimes t_3) : \beta^{-1}$ for every $t_1,t_2,t_3  \in {\cal T}_S$, and $\tau: a\otimes b \leftrightarrow b \otimes a$ for every $a,b \in S$, i.e., we \emph{limit $\tau$ to generating objects}.
\defn{Generating morphisms} are defined in the same way as for $\mathcal{S}^{\mathrm{ML}}$.
We note that by construction $\mathcal{S}^{\mathrm{K}}$ is a wide subcategory of $\mathcal{S}^{\mathrm{ML}}$.
\begin{definition}
    \label{def:free-Kap}
    We denote  by $\mathcal{F}(S)^{\mathrm{K}}$ the quotient of $\mathcal{S}^{\mathrm{K}}$ by localization (inverting the $\beta$ morphisms), by the axioms $\tau_{a,b}\circ \tau_{b,a}=1$, by the axioms of bifunctors, by the naturality conditions for $\beta$, by the coherence conditions of  monoidal categories, and by the axioms in dodecagonal form given by the solid arrows in~\cref{fig:dodecagon} (left), for $a,b,c$ ranging over $S$ only.
\end{definition}

\begin{figure}[h!]
	\centerline{\includegraphics[scale=.5]{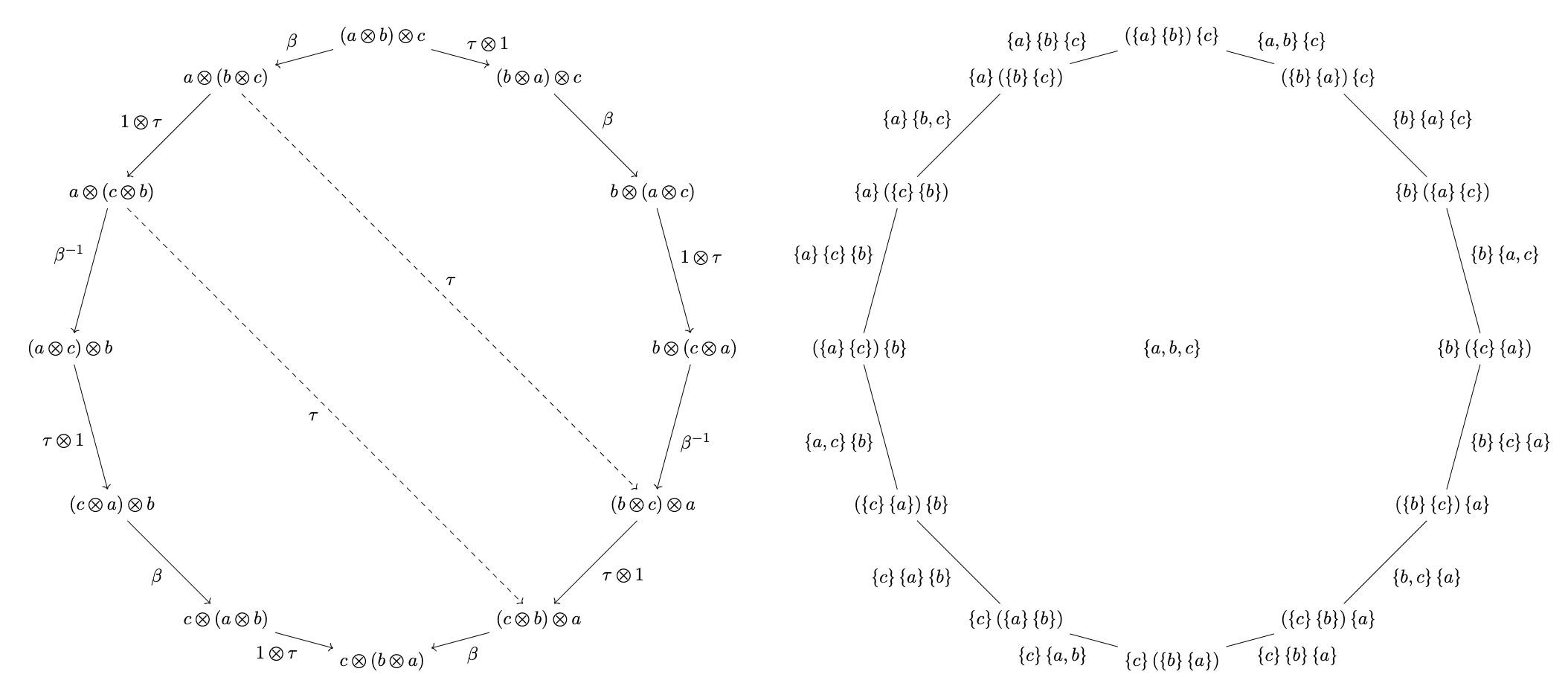}}
	\caption{Kapranov dodecagons.}
    \label{fig:dodecagon}
\end{figure}

We pause here to reflect on the difference between the two presentations. In the second one, we have less generators, and we have lost hexagons. For an intuition, here is how Mac Lane himself motivated his hexagonal axioms (verbatim, just changing the notation to fit with ours) in~\cite{MacLane63}:
\begin{quote}
    \label{q:ML}
The instance $\tau_{\kappa\otimes \mu,\nu}$ interchanges the block $\kappa\mu$ with the single letter $\nu$; the hexagon condition states that this interchange may be replaced by two instances of $\tau$ which interchange single letters with $\nu$. Repeated such replacement using instances of the hexagon shows that any interchange of successive blocks may be replaced by interchanges of successive letters.
\end{quote}
In other words, hexagons are now taken as definitions rather than axioms. 
But how do we guarantee that the  general $\tau$ morphisms defined in this way define a natural transformation? 
This is what the dodecagons are for.

Let $\mathcal{C}$  be a symmetric monoidal category. 
By freeness again, any function $\rho : S \to \obj(\mathcal{C})$ extends uniquely to a functor $\extd{-}{\mathrm{K}}:\mathcal{S}^{\mathrm{K}} \to \mathcal{C}$.
This functor is the restriction of $\extd{-}{\mathrm{ML}}$ to $\mathcal{S}^{\mathrm{K}}$, and factorizes through the quotient functor $[-]^{\mathrm{K}}:\mathcal{S}^{\mathrm{K}}\to  \mathcal{F}(S)^{\mathrm{K}}$.
\begin{thm}[Kapranov coherence theorem for symmetric monoidal categories]
\label{thm:coherence-Kapranov}
 For any two parallel morphisms $\phi_1,\phi_2: t_1 \to t_2$ in~$\mathcal{S}^{\mathrm{K}}$, we have~$[\phi_1]^{\mathrm{K}}=[\phi_2]^{\mathrm{K}}$.
\end{thm}

In order to prove this ``Kapranov style'' coherence, we need to first recall the construction of the \defn{permuto-associahedra}, a family of polytopes whose faces are in bijection with parenthesized ordered partitions of a finite set.
We refer to \cite[Sec.~9.3]{Ziegler95} for details, see also \cite{kapranov1993} and \cite{reinerCoxeterassociahedra1994}.
Given a finite set $A$ of cardinal $n$ and a parenthesized permutation $\sigma$ of its elements, one associates a section $\gamma^\sigma$ of the projection from the $n$-cube to the cyclic polygon with $n+1$ vertices \cite[Ex.~9.14]{Ziegler95}, whose integral over the base gives a point~$M(\sigma)$ in $\R^n$. 
The \defn{permuto-associahedron} $P_A$ is the convex hull of the points $M(\sigma)$, for all parenthesized permutations $\sigma$ of the elements of $A$. 
It has dimension $n-1$. 
One then shows that the poset of parenthesized ordered partitions of $A$, ordered according to the rules below, is isomorphic to the poset of faces of $P_A$ \cite[Thm.~9.15]{Ziegler95}.

Parenthesized ordered partitions of $A$ can be drawn as planar trees whose leaves are decorated with the parts of a partition of $A$.
The subface relation $\prec$ is defined by two clauses: one can contract an edge of the tree, or remove a node all of whose  incoming edges are leaves and decorate its outcoming edge -- now a leaf -- with the union of the decorations of those incoming edges.
The maximal face is $A$.
For example, with $A=\set{a_1,\ldots,a_7}$, the following is a face:
$$\begin{array}{l}
(\set{a_1} \set{a_4} \set{a_2,a_6}) \set{a_3,a_5,a_7}
\end{array}$$
which is covered by the following two elements.
$$\begin{array}{lll}
(\set{a_1} \set{a_4} \set{a_2,a_6}) \set{a_3,a_5,a_7} & \prec & \set{a_1} \set{a_4} \set{a_2,a_6} \set{a_3,a_5,a_7} \\
(\set{a_1} \set{a_4} \set{a_2,a_6}) \set{a_3,a_5,a_7} & \prec &  \set{a_1,a_2,a_4,a_6} \set{a_3,a_5,a_7}.
\end{array}$$

Given the set $S$, let us consider the collection $\mathcal{P}(S)$ of polytopes with one copy of the permuto-associahedron $P_A$ for each finite subset $A \subset S$.
\begin{proposition}
\label{bijections-Kapranov}
    There are bijections between
\begin{enumerate}
    \item objects of $\mathcal{S}^{\mathrm{K}}$ and vertices of the permuto-associahedra in $\mathcal{P}(S)$,
    \item generating morphisms of $\mathcal{S}^{\mathrm{K}}$ and edges of the permuto-associahedra in $\mathcal{P}(S)$,
    \item bifunctoriality, naturality and coherence diagrams and $2$-faces of the permuto-associahedra in $\mathcal{P}(S)$.
\end{enumerate}
\end{proposition}
\begin{proof}
    The 0-dimensional faces of $P_A$ are fully parenthesized words whose letters are singletons, and are in obvious bijective correspondence with the elements of ${\cal T}_A$. 
The 1-dimensional faces are 
\begin{itemize}
\item
either fully parenthesized words whose letters are singletons but for one letter which is a two-element set $\set{a_i,a_j}$ and feature an application of the basic morphism $\tau_{a_i,a_j}$, 
\item or an ``almost'' fully parenthesized word of singletons, with just one parenthesis removed, yielding a subword $(\set{a_i} \set{a_j} \set{a_k})$, featuring an application of the basic morphism $\beta_{a_i,a_j,a_k}$ or $\beta_{a_i,a_j,a_k}^{-1}$
-- the orientation of the edge being ``decided'' by the shape of its end vertices.
\end{itemize}
Finally, the 2-dimensional faces can be analyzed much in the same way as in~\cref{prop:bijection-nestings}, and seen to correspond to bifunctoriality, naturality of $\beta$, and to the pentagons and dodecagons. 
We have pictured the poset view of the 
latter in~\cref{fig:dodecagon} (right). 
The reader can also convince himself on this figure how the orientation of the $\beta$ arrows on the left can be reconstructed from the non-oriented dodecagon on the right.
\end{proof}
\begin{proof}[Proof of \cref{thm:coherence-Kapranov}]
The proof is similar to the proof of \cref{thm:coherence-operahedra}, using either the Van Kampen (\cref{thm:top-coherence}) or the Morse (\cref{prop:polytopes,p:second-proof}) technique.
\end{proof}
\begin{rem}
\label{rem:simple-permutoassociahedron}
 Alternatively, one could use the same strategy with the \emph{simple permutoassociahedra} from~\cite{baralicSimplePermutoassociahedron2019}, involving yet another equivalent presentation of symmetric monoidal categories. 
\end{rem}
The following proposition establishes relations between the Mac Lane and Kapranov presentations of symmetric monoidal categories.
\begin{proposition}[Kapranov--Mac Lane comparison]
\label{Kapranov-MacLane}
\leavevmode
\begin{enumerate}
\item Let $\phi_1,\phi_2: t_1 \to t_2$  be parallel morphisms of $\mathcal{S}^{\mathrm{K}}$, and suppose that $[\phi_1]^{\mathrm{K}}=[\phi_2]^{\mathrm{K}}$. Then $[\phi_1]^{\mathrm{ML}}=[\phi_2]^{\mathrm{ML}}$.
\item Let $\phi$ be a morphism of $\mathcal{S}^{\mathrm{ML}}$. Then there is a morphism $\psi$ of 
$\mathcal{S}^{\mathrm{K}}$ such that $[\phi]^{\mathrm{ML}}=[\psi]^{\mathrm{ML}}$.
\end{enumerate}
\end{proposition}
\begin{proof}
The proof of Point (1) is visualized in~\cref{fig:dodecagon} (left).  
The two dotted lines delimit two Mac Lane hexagons on the top and at the bottom and a naturality square in the middle.
Explicitly, the two dotted $\tau$-morphisms are $\tau_{a,b\otimes c}$ and $\tau_{a,c\otimes b}$.  
As for Point (2), we observe that a morphism $\psi$ as in the statement can be obtained by repeatedly applying the procedure described by Mac Lane in the quotation which follows \cref{def:free-Kap} above.
\end{proof}
\begin{thm}[Mac Lane coherence theorem for symmetric monoidal categories]
\label{thm:coherence-MacLane}
    For any symmetric monoidal category $\mathcal{C}$, for any  function $\rho : S \to \obj(\mathcal{C})$, and for any two parallel morphisms $\phi_1,\phi_2: t_1 \to t_2$ in~$\mathcal{S}^{\mathrm{ML}}$, we have $\extd{\phi_1}{\mathrm{ML}}=\extd{\phi_2}{\mathrm{ML}}$.
\end{thm}
\begin{proof} 
Since the functor $\extd{-}{\mathrm{ML}}$ factorizes through the functor $[-]^{\mathrm{ML}}$, it is enough to prove that $[\phi_1]^{\mathrm{ML}}=
[\phi_2]^{\mathrm{ML}}$. 
By Point (2) of~\cref{Kapranov-MacLane}, there exist $\psi_1$ and $\psi_2$ in $\mathcal{S}^{\mathrm{K}}$ such that $[\psi_1]^{\mathrm{ML}}=[\phi_1]^{\mathrm{ML}}$ and 
$[\psi_2]^{\mathrm{ML}}=[\phi_2]^{\mathrm{ML}}$.
In particular $\psi_1$ and $\psi_2$ are parallel, so by~\cref{thm:coherence-Kapranov} we get $[\psi_1]^{\mathrm{K}}=[\psi_2]^{\mathrm{K}}$, and by Point (1) of~\cref{Kapranov-MacLane} we have $[\psi_1]^{\mathrm{ML}}=[\psi_2]^{\mathrm{ML}}$.
Thus, we have $[\phi_1]^{\mathrm{ML}} = [\psi_1]^{\mathrm{ML}} =  [\psi_2]^{\mathrm{ML}} =  [\phi_2]^{\mathrm{ML}}$, which concludes the proof.
\end{proof}
\begin{rem} 
\label{rem:Kapranov-to-MacLane}
One can see easily that this proof  also shows that the categories  $\mathcal{F}(S)^{\mathrm{K}}$ and~$\mathcal{F}(S)$ are isomorphic.  
The statement of this fact is unrelated to coherence issues, but  its proof relies on Kapranov style coherence.
In other words, the  proof that Kapranov's conditions imply Mac Lane's conditions  is non-trivial, in contrast to the converse direction (cf. \cref{Kapranov-MacLane}); a result of the magic of polytopes!
\end{rem}
\begin{rem}
\label{MacLane-Kapranov-Simple}
Note that contrary to the case of the operahedra, there does not seem to exist an orientation vector whose induced orientation on the edges of the permuto-associahedra coincides with a consistent orientation of the $\beta$ and $\tau$ arrows \emph{based on conventions independent of the orientation vector}.
This follows from the observation that the dodecagon (\cref{fig:dodecagon}, left) involves $\beta^{-1}$ arrows.  
The same remarks apply to the simple permuto-associahedra of \cite{baralicSimplePermutoassociahedron2019}.
As for the original presentation of Mac Lane (for which no polytopal correspondence is known), one could still hope to have an associated  term rewriting system.  
But instead Mac Lane's proof (rightly!) proceeds in two stages: first using rewriting for the monoidal part ($\beta$ only), and then dealing with the symmetric part using Coxeter's presentation of the symmetric groups. 
It seems that one cannot do better. 
Indeed, even if  Mac Lane's hexagon does not involve $\beta^{-1}$  arrows, the latter would pop up when taking the combinatorics of orientation of the $\tau$ arrows into account. 
As an illustration, suppose that we decide to move parentheses to the right for $\beta$, fix a total order on $S$ and split the involutive $\tau$ into $\tau$ and $\tau^{-1}$ according to where the maximum lies. 
Then, for $\mu<\kappa<\nu$ the hexagon becomes
 \begin{center}
\resizebox{0.35\linewidth}{!}{
    \begin{tikzpicture}[scale=2.5]
    \node (P1) at (0,1) {$(\kappa\otimes \mu) \otimes \nu$};
    \node (P2) at (-0.866,0.5) {$\kappa\otimes (\mu \otimes \nu)$};
    \node (P3) at (-0.866,-0.5) {$(\mu \otimes \nu) \otimes \kappa$};
    \node (P4) at (0,-1) {$\mu \otimes (\nu \otimes \kappa)$};
    \node (P5) at (0.866,0.5) {$(\mu \otimes \kappa) \otimes \nu$} ;
    \node (P6) at (0.866,-0.5) {$\mu \otimes (\kappa \otimes \nu)$};
    \draw[->] (P1)--(P2) node[midway,above left] {$\beta$};
    \draw[<-] (P2)--(P3) node[midway,left] {$\tau$};
    \draw[->] (P3)--(P4) node[midway,below left] {$\beta$};
    \draw[->] (P1)--(P5) node[midway,above right] {$\tau\otimes 1$};
    \draw[->] (P5)--(P6) node[midway,right] {$\beta$};
    \draw[<-] (P6)--(P4) node[midway,below right] {$1\otimes\tau$};
\end{tikzpicture}}
\end{center}
and a local confluence diagram for the pair of rewritings  out of $(\mu\otimes\nu)\otimes \kappa$ cannot be completed without inverting $\beta$ arrows.
\end{rem}


\section{Perspectives}


\subsection{Further applications} 
\label{sec:further}
One can also use the same strategy to prove coherence for \emph{unital} non-symmetric monoidal categories, using the unital associahedra of F. Muro and A. Tonks \cite{muroUnitalAssociahedra2014}.

It is natural to ask if the construction of unital associahedra could be extended to the permutoassociahedra, in such a way as to provide a topological proof of coherence for unital symmetric monoidal categories. 
The question of the existence of these constructions at the operadic level (i.e. does there exist unital operahedra, symmetric operahedra, and unital symmetric operahedra?) is, to our knowledge, still open as well. 

Another immediate application of \cref{thm:top-coherence} is the coherence of strong non-symmetric monoidal functors between non-symmetric monoidal categories \cite{epsteinFunctorsTensoredCategories1966}. 
The corresponding topological objects are in this case the family of multiplihedra \cite{Stasheff70,Forcey08}.
The generalization to strong morphisms between non-symmetric categorified operads also goes through, involving this time the family of multiploperahedra described at the end of the introduction in \cite{MazuirLA22}.

In the same spirit as in \cref{thm:coherence-operahedra}, one could obtain coherence results for categorifications of many operad-like structures, for instance the ones described in \cite{BMO20}: categorified modular operads, wheeled properads, and permutads (shuffle algebras), among others.
In order to treat cyclic and symmetric structures, one could take inspiration from the reduction process followed in \cite{curienCategorifiedCyclicOperads2020} for the case of cyclic symmetric categorified operads.

\subsection{Higher categories} 
\label{sec:higher}

\cref{thm:top-coherence} shows the precise relationship between coherence and connectedness.
In addition to Kapranov's claim \cite{kapranov1993}, it clarifies other statements in the literature, such as the proof of \cite[Prop.~3.9]{KapranovVoevodsky94}.
There, the incipit ``since $P_n$ is a convex polytope'' could be replaced by a more precise ``since $P_n$ is simply connected''.

In the case of (symmetric) monoidal categories, \cref{thm:top-coherence} demonstrates that coherence is equivalent to the vanishing of the first homotopy groups of the (permuto-)associahedra. 
Since the (permuto-)associahedra are contractible, and therefore all their homotopy groups vanish, one could hope for a topological proof of higher dimensional coherence theorems.

One dimension higher, N. Gurski has shown in \cite[Thms.~22 \& 23]{Gurski11} that coherence for (braided) monoidal bicategories is equivalent to the vanishing of fundamental $2$-groupoids of braid groups.
Recent results of S. Barkan provide evidence for higher dimensional statements, relating coherence diagrams of $\infty$-operads to the connectivity of certain operadic partition complexes \cite{barkanArityApproximationInfty2022}.
It seems likely that the present results could be interpreted as a strict version and a special case of \cite[Thm.~B]{barkanArityApproximationInfty2022}. 
It would be interesting to see how the permuto-associahedra arise in the strictification process, and how they are related to operadic partition complexes.

\bigskip

\emph{Acknowledgements.}   
We would like to thank all the participants of the Roberta Seminar held on September 30th, 2020 in Paris for planting the seeds of the present note.  
We would like to thank Andrea Bianchi for enlightening discussions, for pointing out mistakes in a first draft, and for reminding us of the Seifert--Van Kampen theorem. 
Finally, we are grateful to Zoran Petri{\'c} for precious discussions on weak Cat-operads, to Marcy Robertson and to the anonymous referee for helpful comments which led to substantial improvements of the paper.
The first author wishes to thank the Mathematisches Forschungsinstitut Oberwolfach for several visits as a Research Fellow in 2020 and 2021, thanks to which much of the work presented here, as well as in the companion paper \cite{CLA24}, could take off.
The second author would like to thank the Copenhagen Center for Geometry and Topology as well as the Max Planck Institute for Mathematics in Bonn where part of this work was carried out.

\bibliographystyle{amsalpha}

\bibliography{Coherence}

\end{document}